\newcommand{\C}{\mathbb{C}}
\DeclareMathOperator{\tr}{tr}
\DeclareMathAlphabet{\varmathbb}{U}{bbold}{m}{n}
\newcommand{\one}{\mathds{1}}
\DeclareMathOperator*{\Exp}{\mathbb{E}}
\DeclareMathOperator{\End}{End}
\DeclareMathOperator{\rk}{\textbf{rk}}
\begin{document}

\newtheorem{theorem}{Theorem}
\newtheorem{lemma}{Lemma}
\newtheorem{claim}{Claim}
\newtheorem{proposition}{Proposition}
\newtheorem{exercise}{Exercise}
\newtheorem{corollary}{Corollary}
\theoremstyle{definition}
\newtheorem{definition}{Definition}

\newcommand{\eps}{\epsilon}
\newcommand{\Z}{{\mathbb{Z}}}
\newcommand{\Var}{\mathop\mathrm{Var}}
\newcommand{\Cov}{\mathop\mathrm{Cov}}
\renewcommand{\Re}{\mathop\mathrm{Re}}
\newcommand{\frob}[1]{\left\| #1 \right\|_{\rm F}}
\newcommand{\opnorm}[1]{\left\| #1 \right\|_{\rm op}}
\newcommand{\bopnorm}[1]{\bigl\| #1 \bigr\|_{\rm op}}
\newcommand{\norm}[1]{\left\| #1 \right\|}
\newcommand{\bnorm}[1]{\bigl\| #1 \bigr\|}
\newcommand{\abs}[1]{\left| #1 \right|}
\newcommand{\inner}[2]{\left\langle #1, #2 \right\rangle}
\renewcommand{\outer}[2]{\left| #1 \right\rangle \left\langle #2 \right|}

\newcommand{\wf}{\widehat{f}}
\newcommand{\wg}{\widehat{g}}
\newcommand{\wG}{\widehat{G}}
\newcommand{\wH}{\widehat{H}}
\newcommand{\wS}{\widehat{S}}
\newcommand{\wpsi}{\widehat{\psi}}
\newcommand{\wpsidag}{\widehat{\psi^\dagger}}
\newcommand{\U}{\textsf{U}}
\newcommand{\GL}{\textsf{GL}}
\newcommand{\fsub}{\operatorname{F}}
\newcommand{\dmin}{d_{\min}}
\newcommand{\tA}{\tilde{A}}
\newcommand{\tpsi}{\tilde{\psi}}

%% Title could be
%\title{Fourier-Analytic Obstructions to Approximate Homomorphisms}
\title{Approximate Representations \\ and Approximate Homomorphisms}

\author{Cristopher Moore\\Computer Science Department\\University of New Mexico\\and the Santa Fe Institute\\{\texttt{moore@cs.unm.edu}}
\and Alexander Russell\\Computer Science and Engineering\\University of Connecticut\\{\texttt{acr@cse.uconn.edu}}}

\maketitle

\begin{abstract}
%Approximate algebraic structures now play an increasing role in fields 
%such as additive combinatorics. 
  Approximate algebraic structures play a defining role in arithmetic
  combinatorics and have found remarkable applications to basic
  questions in number theory and pseudorandomness.  Here we study
  \emph{approximate representations} of finite groups: functions $\psi
  : G \to \U_d$ such that $\Pr[\psi(xy) = \psi(x) \,\psi(y)]$ is
  large, or more generally $\Exp_{x,y} \norm{\psi(xy) - \psi(x)
    \,\psi(y)}_2^2$ is small, where $x, y$ are uniformly random
  elements of the group $G$ and $\U_d$ denotes the unitary group of
  degree $d$.  We bound these quantities in terms of the ratio $d /
  \dmin$ where $\dmin$ is the dimension of the smallest nontrivial
  representation of $G$.  As an application, we bound the extent to
  which a function $f:G \to H$ can be an approximate homomorphism
  where $H$ is another finite group.  We show that if $H$'s
  representations are significantly smaller than $G$'s, no such $f$
  can be much more homomorphic than a random function.
  
  We interpret these results as showing that if $G$ is quasirandom,
  that is, if $\dmin$ is large, then $G$ cannot be embedded in a small
  number of dimensions, or in a less-quasirandom group, without
  significant distortion of $G$'s multiplicative structure.  We also
  prove that our bounds are tight by showing that minors of genuine
  representations and their polar decompositions are essentially
  optimal approximate representations.
\end{abstract}

In additive combinatorics and number theory, an \emph{approximate
  subgroup} of a group $G$ is a subset $H$ which is roughly closed
under multiplication: that is, such that $\Pr_{x,y} [xy \in H]$ is
large where $x,y$ are uniformly random elements of $H$.
%Inspired by
%the success of harmonic analysis in group theory and computer science,
We focus on \emph{approximate group representations}---functions
$\psi$ from $G$ to $\U_d$, the group of $d \times d$ unitary matrices,
such that $\psi$ acts roughly like a homomorphism.  We then use our
results to bound the existence of approximate homomorphisms from $G$
to another finite group $H$.
  
Let $G$ be a finite group and let $\psi : G \to \U_{d_\psi}$.  If $\psi(xy) = \psi(x) \,\psi(y)$ for all $x,y \in G$, then we call $\psi$ a \emph{representation}.  We are interested in understanding how close $\psi$ can be to a representation if $G$ does not in fact have any $d_\psi$-dimensional representations---in particular, in the case where $G$ is \emph{quasirandom}~\cite{gowers} in the sense that its smallest nontrivial representation has dimension $\dmin > d_\psi$.  

We can measure the extent to which $\psi$ fails to act as a
representation by the expected $\ell_2$ distance between $\psi(xy)$
and $\psi(x) \,\psi(y)$, where $x$ and $y$ are chosen uniformly from
$G$.  To control the trivial case where $\psi(x) = \one$ for all $x$,
we assume that $\Exp_x \psi(x)$ is bounded in its operator norm.  We
also assume that the expected Frobenius norm squared of $\psi(x)$ is
$d_\psi$, which holds, for example, if each $\psi(x)$ is unitary.

Our main theorem asserts that the expected $\ell_2$ distance is bounded below by a function of the ratio $d_\psi / \dmin$.  Roughly speaking, if we think of $\psi$ as a low-dimensional embedding of $G$, we cannot avoid a certain amount of ``distortion'' of $G$'s multiplicative structure.  We let $\opnorm{A}$ denote the operator norm, and let $\norm{A}_{\fsub}^2 = \tr A^\dagger A$ denote the Frobenius norm.  

\begin{theorem}
\label{thm:approx-irreps}
Let $G$ be a group and let $\dmin$ denote the dimension of $G$'s smallest nontrivial irrep.  For any function $\psi : G \to \U_{d_\psi}$, 
\begin{equation}
\label{eq:approx-irreps}
\Exp_{x,y \in G} \norm{\psi(xy) - \psi(x) \,\psi(y)}_{\fsub}^2 
\ge 2d_\psi \left( 1 - \bopnorm{ \Exp_x \psi(x) }^3 - \sqrt{\frac{d_\psi}{\dmin}} \right) \, .
\end{equation}
\end{theorem}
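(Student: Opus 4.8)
The plan is to expand the Frobenius norm, reduce the statement to an upper bound on a single trace quantity, and then estimate that quantity by realizing $\psi$ as a compression of the regular representation of $G$ and invoking Schur orthogonality.

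Since every $\psi(x)$ is unitary, $\norm{\psi(xy)}_\fsub^2 = \norm{\psi(x)\,\psi(y)}_\fsub^2 = d_\psi$, so expanding the square and taking expectations gives
\[
\Exp_{x,y}\norm{\psi(xy)-\psi(x)\,\psi(y)}_\fsub^2 \;=\; 2d_\psi - 2\Re M , \qquad M := \Exp_{x,y}\tr\!\bigl[\psi(xy)^\dagger\,\psi(x)\,\psi(y)\bigr] .
\]
Writing $\mu := \Exp_x \psi(x)$, it therefore suffices to prove $\abs{M} \le d_\psi\bigl(\bopnorm{\mu}^3 + \sqrt{d_\psi/\dmin}\bigr)$.

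Next I would introduce the isometry $\mathcal{A}\colon \C^{d_\psi}\to\C^{d_\psi}\otimes\C^{G}$ given by $(\mathcal{A}v)(x) = \psi(x)v/\sqrt{\abs{G}}$ --- it is an isometry precisely because $\psi$ is unitary-valued --- and let $R_g = I_{d_\psi}\otimes\rho_{\mathrm{reg}}(g)$ be the right regular representation acting on the $\C^{G}$ factor. A one-line computation identifies $B(g) := \mathcal{A}^\dagger R_g\,\mathcal{A}$ with $\Exp_x \psi(x)^\dagger\psi(xg)$, and averaging the definition of $M$ over $x$ with $y$ fixed gives $M = \Exp_g \tr[B(g)^\dagger\psi(g)]$. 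Because $x$ and $xg$ are jointly uniform and independent, $\Exp_g B(g) = \mathcal{A}^\dagger(\Exp_g R_g)\mathcal{A} = \mu^\dagger\mu$; setting $B_0(g) := B(g)-\mu^\dagger\mu$ (mean zero) we obtain $M = \tr[\mu^\dagger\mu^2] + \Exp_g \tr[B_0(g)^\dagger\psi(g)]$. The first term is the ``mean'' contribution and is controlled by $\abs{\tr[\mu^\dagger\mu^2]} \le \norm{\mu}_\fsub\norm{\mu^2}_\fsub \le \bopnorm{\mu}\norm{\mu}_\fsub^2 \le d_\psi\bopnorm{\mu}^3$, exactly the first term of the target bound; the second, by Cauchy--Schwarz in the Frobenius inner product, is at most $\sqrt{d_\psi}\,\bigl(\Exp_g\norm{B_0(g)}_\fsub^2\bigr)^{1/2}$.

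Everything then reduces to the estimate $\Exp_g\norm{B_0(g)}_\fsub^2 \le d_\psi^2/\dmin$. The point is that $\Exp_g R_g$ is the orthogonal projection onto $\C^{d_\psi}\otimes(\text{constants})$, so $R_g - \Exp_h R_h$ acts as $I_{d_\psi}$ tensored with the \emph{nontrivial} part $\rho'$ of the regular representation, all of whose irreducible constituents have dimension $\ge\dmin$. Compressing $\mathcal{A}$ to the nontrivial subspace --- which only decreases Frobenius norm, so the compression $\widetilde{\mathcal{A}}$ has $\norm{\widetilde{\mathcal{A}}}_\fsub^2\le d_\psi$ --- gives $B_0(g) = \widetilde{\mathcal{A}}^\dagger\bigl(I_{d_\psi}\otimes\rho'(g)\bigr)\widetilde{\mathcal{A}}$, whose $(i,j)$ entry is $\inner{v_i}{(I_{d_\psi}\otimes\rho'(g))\,v_j}$ for the columns $v_i$ of $\widetilde{\mathcal{A}}$. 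The key lemma I would prove is a quasirandom mixing bound: for any vectors $a,b$ in a unitary $G$-module whose irreducible constituents all have dimension $\ge\dmin$,
\[
\Exp_g \bigl\lvert\inner{a}{\sigma(g)\,b}\bigr\rvert^2 \;\le\; \frac{\norm{a}^2\,\norm{b}^2}{\dmin} .
\]
This is proved by decomposing $a$ and $b$ into isotypic components (the cross terms vanish under $\Exp_g$) and, inside a component $\rho^{\oplus m}$, applying the Schur orthogonality relations to reduce the left-hand side to $\tfrac1{d_\rho}\tr[(X^\dagger X)(Y^\dagger Y)] \le \tfrac1{d_\rho}\tr[X^\dagger X]\,\tr[Y^\dagger Y]$, using positive semidefiniteness of $X^\dagger X$ and $Y^\dagger Y$. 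Applying the lemma to each pair $(v_i,v_j)$ and summing yields $\Exp_g\norm{B_0(g)}_\fsub^2 \le \tfrac1{\dmin}\bigl(\sum_i\norm{v_i}^2\bigr)^2 \le d_\psi^2/\dmin$. Combining the two pieces gives $\abs{M} \le d_\psi\bopnorm{\mu}^3 + d_\psi\sqrt{d_\psi/\dmin}$, hence $\Exp_{x,y}\norm{\psi(xy)-\psi(x)\psi(y)}_\fsub^2 = 2d_\psi - 2\Re M \ge 2d_\psi\bigl(1-\bopnorm{\mu}^3-\sqrt{d_\psi/\dmin}\bigr)$.

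The main obstacle is the last step: proving the mixing estimate with the \emph{correct} constant $\dmin$ (a crude operator-norm bound on $\Exp_g\sigma(g)\otimes\overline{\sigma(g)}$ loses the gain entirely, so one must argue isotypic-component-by-component), and arranging the ``nontrivial-part'' compression so that it is precisely the $\dmin$-quasirandomness of $G$ that enters rather than a weaker quantity. A secondary subtlety is routing the mean term through $\tr[\mu^\dagger\mu^2]$ so as to produce $\bopnorm{\mu}^3$ rather than the weaker $\bopnorm{\mu}^2$.
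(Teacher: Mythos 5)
Your proof is correct, and it takes a genuinely different route from the paper's. The paper works in Fourier space: it treats $\wpsi(\rho^\dagger)=\Exp_x[\psi(x)\otimes\rho(x)]$ as a four-index tensor, rewrites the trace $M=\Exp_{x,y}\tr[\psi(xy)^\dagger\psi(x)\psi(y)]$ as $\sum_\rho d_\rho\tr\bigl[\wpsi(\rho^\dagger)\,\wpsi(\rho^\dagger)^\dagger\,\wpsi(\rho^\dagger)\bigr]$ via Schur's lemma, applies the H\"older-type bound $\tr AA^\dagger A\le\opnorm{A}\frob{A}^2$, controls $\bopnorm{\wpsi(\rho^\dagger)}\le\sqrt{d_\psi/d_\rho}$ by a Schmidt decomposition plus Cauchy--Schwarz, and sums with Plancherel. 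You instead stay in ``physical'' space: you form the autocorrelation $B(g)=\Exp_x\psi(x)^\dagger\psi(xg)$, realize it as a compression $\mathcal{A}^\dagger R_g\mathcal{A}$ of the right regular representation, split off the mean $\mu^\dagger\mu$ (handled identically to the paper's trivial-representation term, giving $d_\psi\opnorm{\mu}^3$), and then bound the fluctuation by Cauchy--Schwarz in $g$ together with the second-moment mixing lemma $\Exp_g|\inner{a}{\sigma(g)b}|^2\le\norm{a}^2\norm{b}^2/\dmin$, proved isotypic-component-wise by Schur orthogonality. I checked the pieces: $\mathcal{A}$ is an isometry exactly under~\eqref{eq:cond}, $\Exp_g B(g)=\mu^\dagger\mu$, the cross terms between non-isomorphic isotypic components do vanish, the within-component computation gives $\tfrac1{d_\rho}\tr[(XX^\dagger)(YY^\dagger)]\le\tfrac1{d_\rho}\tr[XX^\dagger]\tr[YY^\dagger]$, and $\frob{\widetilde{\mathcal{A}}}^2\le d_\psi$, so $\Exp_g\frob{B_0(g)}^2\le d_\psi^2/\dmin$ and the final constants match the paper's exactly. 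What each approach buys: yours is arguably more elementary (no tensor Fourier bookkeeping) and makes the quasirandomness mechanism explicit in the Gowers-style mixing estimate, and it extends verbatim to unitary-in-expectation $\psi$; the paper's Fourier route yields the irrep-by-irrep intermediate inequality~\eqref{eq:tr-bound-unsimp}, with a separate factor $\sqrt{d_\psi/d_\rho}$ for each $\rho$, which is reused later in the proof of Theorem~\ref{thm:approx-hom-rh}, whereas your argument aggregates everything into the single $\dmin$ bound.
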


\noindent
If $A$ and $B$ are random unitary matrices of dimension $d_\psi$ distributed according to Haar measure, then $\Exp_{A,B} \norm{A-B}_{\fsub}^2 = 2d$.  Thus Theorem~\ref{thm:approx-irreps} shows that when $d_\psi / \dmin$ and $\bopnorm{ \Exp_x \psi(x) }$ are small, $\psi$ is little better than a random function from $G$ to $\U_d$ as far as acting like a representation is concerned.  

We comment that Theorem~\ref{thm:approx-irreps} holds in the more
general setting where $\psi$ is a function from $G$ to the group
$\GL_{d_\psi}$ of invertible $d_\psi$-dimensional matrices, as long as
$\psi$ is ``unitary in expectation'' in the sense that
\begin{equation}
\label{eq:cond}
\Exp_x \psi(x)^\dagger \psi(x) = \one \, .
%\Exp_x \frob{\psi(x)}^2 = \Exp_{x,y} \frob{\psi(x) \,\psi(y)}^2 = d_\psi \, ,
\end{equation}

In the regime where $\psi$ is very close to a representation, our work is related to Babai, Friedl, and Luk\'acs~\cite{babai-near-focs,babai-near}.  They showed that if $\norm{\psi(xy) - \psi(x) \,\psi(y)}_{\fsub}^2$ is sufficiently small, then there is a genuine representation $\rho$ with $d_\rho=d_\psi$ such that $\rho$ is close to $\psi$.  Their definitions are slightly different; for instance, they consider uniform bounds on $\norm{\psi(xy) - \psi(x) \,\psi(y)}_{\fsub}^2$ rather than its expectation over all pairs of elements $x,y \in G$, and also place a bound on $\norm{\psi(1)-\one}_{\fsub}^2$.  Nevertheless, the Fourier-analytic proof of Theorem~\ref{thm:approx-irreps} uses similar Fourier analytic techniques as in their work.

Theorem~\ref{thm:approx-irreps} yields the following corollary, bounding the probability that $\psi(xy) = \psi(x) \,\psi(y)$ for uniformly random $x, y$:
\begin{corollary}
\label{cor:prob}
Let $G$, $\dmin$, and $\psi: G \to U_{d_\psi}$ be as in Theorem~\ref{thm:approx-irreps}.  If $x, y \in G$ are uniformly random, then
\[
\Pr[ \psi(xy) = \psi(x) \,\psi(y) ]
\le \frac{1}{2} \left( 1 + \bopnorm{ \Exp_x \psi(x) }^3 + \sqrt{\frac{d_\psi}{\dmin}} \right) \, .
\]
\end{corollary}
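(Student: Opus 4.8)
The plan is to derive the corollary from Theorem~\ref{thm:approx-irreps} by a one-line averaging argument. First I would record a trivial pointwise upper bound on the integrand: for every pair $x,y\in G$, both $\psi(xy)$ and $\psi(x)\,\psi(y)$ lie in $\U_{d_\psi}$, so each has Frobenius norm exactly $\sqrt{d_\psi}$, and the triangle inequality gives $\norm{\psi(xy)-\psi(x)\,\psi(y)}_{\fsub}\le 2\sqrt{d_\psi}$, i.e.
\[
\norm{\psi(xy)-\psi(x)\,\psi(y)}_{\fsub}^2 \le 4 d_\psi \qquad \text{for all } x,y\in G \, .
\]

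Next, write $p=\Pr_{x,y}[\psi(xy)=\psi(x)\,\psi(y)]$ and split the expectation according to whether this event occurs. On the event the summand vanishes, and on its complement it is at most $4d_\psi$ by the bound above, so
\[
\Exp_{x,y\in G}\norm{\psi(xy)-\psi(x)\,\psi(y)}_{\fsub}^2 \;\le\; 4 d_\psi\,(1-p) \, .
\]
Combining this with the lower bound of Theorem~\ref{thm:approx-irreps} and cancelling the common factor $4d_\psi$ yields
\[
\frac12\left(1-\bopnorm{\Exp_x\psi(x)}^3-\sqrt{\frac{d_\psi}{\dmin}}\right)\;\le\;1-p \, ,
\]
and solving for $p$ gives precisely the claimed inequality.

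There is no real obstacle here; the only step worth a moment's attention is the uniform estimate $\norm{\psi(xy)-\psi(x)\,\psi(y)}_{\fsub}^2\le 4d_\psi$, which genuinely uses that each value of $\psi$ is unitary (in the more general $\GL_{d_\psi}$ setting of~\eqref{eq:cond} this pointwise bound need not hold, but the corollary is stated for $\psi:G\to\U_{d_\psi}$, so unitarity of each $\psi(x)$ suffices). Everything else is just rearrangement.
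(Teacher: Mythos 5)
Your proposal is correct and is essentially the paper's own argument: the paper likewise uses the pointwise bound $\frob{A-B}^2 \le 4d$ for unitary $A \ne B$ to get $\Exp_{x,y}\norm{\psi(xy)-\psi(x)\,\psi(y)}_{\fsub}^2 \le 4d_\psi \Pr[\psi(xy)\ne\psi(x)\,\psi(y)]$ and then combines this with Theorem~\ref{thm:approx-irreps}. Your closing remark about unitarity being needed for the pointwise bound is a fair observation, and the rearrangement is exactly as in the paper.
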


\noindent
When $d_\psi / \dmin$ is small, this is tight for a random function $\psi$ that sends half the elements of $G$ to $\one$ and the other half to $-\one$, where each half is chosen uniformly at random from all subsets of size $|G|/2$.

As an application of these results, we consider approximate homomorphisms $f:G \to H$ where $H$ is another finite group, bounding the probability that $f(xy) = f(x) \,f(y)$ for uniformly random pairs $x,y \in G$.  To avoid the trivial homomorphism $f(x)=1$, we require that $f$'s image is close to uniform.  For each $y \in H$ define the probability 
\[
p_f(y) = \Pr_x [f(x)=y] 
\]
that a uniformly random $x \in G$ has image $y$.  Then we bound the $\ell_2$ distance between $p_f$ and the uniform distribution $u(y)=1/|H|$, requiring that 
\begin{equation}
\label{eq:f-uniform}
\norm{p_f-u}_2^2  = \sum_{y \in H} \abs{p_f(y)-\frac{1}{|H|}}^2 \le \frac{\eps}{|H|} \, .
\end{equation}
For instance, this holds with $\eps=1$ if $p_f(y)$ is uniform on a
subgroup of $H$ of index $2$. 

We will use the fact that if $f$ is an approximate homomorphism then, for each irrep $\sigma$ of $H$, the composition $\sigma \circ f$ is an approximate representation of $G$.  Our first bound focuses on one $\sigma$ at a time.

\begin{theorem}
\label{thm:approx-hom}
Let $G$ and $H$ be finite groups, and let $\dmin$ 
%and $\dmin^H$ 
denote the dimension of $G$'s smallest nontrivial irrep.  Let $f: G \to H$ such that~\eqref{eq:f-uniform} holds.  Then 
\[
\Pr[ f(xy) = f(x) \,f(y) ]
\le \frac{1}{2} \min_{\sigma \ne 1} \left( 1 + \sqrt{ \frac{\eps}{d_\sigma} } + \sqrt{\frac{d_\sigma}{\dmin}} \right) \, ,
\]
where $\sigma$ ranges over all of $H$'s nontrivial irreps.
\end{theorem}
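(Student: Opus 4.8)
The plan is to reduce Theorem~\ref{thm:approx-hom} to Corollary~\ref{cor:prob} by composing $f$ with a single nontrivial irrep of $H$. Fix a nontrivial irrep $\sigma$ of $H$ and set $\psi = \sigma \circ f : G \to \U_{d_\sigma}$; this is a bona fide approximate representation since each $\sigma(y)$ is unitary, so Corollary~\ref{cor:prob} applies with $d_\psi = d_\sigma$. Whenever $f(xy) = f(x)\,f(y)$ in $H$, applying the homomorphism $\sigma$ gives $\psi(xy) = \psi(x)\,\psi(y)$, so the event we wish to bound is contained in the corresponding event for $\psi$, and $\Pr[f(xy) = f(x)\,f(y)] \le \Pr[\psi(xy) = \psi(x)\,\psi(y)] \le \frac12\bigl(1 + \bopnorm{\Exp_x \psi(x)}^3 + \sqrt{d_\sigma/\dmin}\bigr)$. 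It then remains to bound $\bopnorm{\Exp_x \psi(x)}$ in terms of $\eps$ and $d_\sigma$.

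The key point is that $\Exp_x \psi(x) = \sum_{y \in H} p_f(y)\,\sigma(y)$ is precisely the Fourier coefficient at $\sigma$ of the deviation $c = p_f - u$: since $\sigma$ is nontrivial, $\sum_{y \in H} \sigma(y) = 0$, and therefore $\sum_y p_f(y)\sigma(y) = \sum_y (p_f(y) - u(y))\sigma(y) = \widehat{c}(\sigma)$. Plancherel's identity on $H$ reads $\sum_{y \in H} \abs{c(y)}^2 = \frac1{|H|}\sum_\tau d_\tau \frob{\widehat{c}(\tau)}^2$, which in particular gives $d_\sigma \frob{\widehat{c}(\sigma)}^2 \le |H|\,\norm{p_f - u}_2^2 \le \eps$ by hypothesis~\eqref{eq:f-uniform}. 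Hence $\bopnorm{\Exp_x \psi(x)} \le \frob{\widehat{c}(\sigma)} \le \sqrt{\eps/d_\sigma}$.

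To finish, note that if $\sqrt{\eps/d_\sigma} \ge 1$ then the claimed bound for this $\sigma$ is already at least $1$ and so holds trivially; otherwise $\bopnorm{\Exp_x\psi(x)} \le \sqrt{\eps/d_\sigma} < 1$, whence $\bopnorm{\Exp_x\psi(x)}^3 \le \bopnorm{\Exp_x\psi(x)} \le \sqrt{\eps/d_\sigma}$. Substituting into the bound from the first paragraph gives $\Pr[f(xy) = f(x)\,f(y)] \le \frac12\bigl(1 + \sqrt{\eps/d_\sigma} + \sqrt{d_\sigma/\dmin}\bigr)$, and since $\sigma$ was an arbitrary nontrivial irrep of $H$ we take the minimum over all such $\sigma$.

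I expect the only delicate point to be extracting the factor $d_\sigma$ in the denominator of $\sqrt{\eps/d_\sigma}$: a crude triangle inequality on $\bopnorm{\sum_y c(y)\sigma(y)}$ followed by Cauchy--Schwarz only yields $\sqrt{\eps}$, and the improvement comes entirely from the orthogonality of matrix coefficients, i.e.\ the $d_\sigma$-weighting in Plancherel. It is worth double-checking that this normalization of the Fourier transform matches the one implicit in the proof of Theorem~\ref{thm:approx-irreps}, so that the two estimates compose cleanly, and that no additional hypothesis on $\psi$ beyond unitarity is needed to invoke Corollary~\ref{cor:prob}.
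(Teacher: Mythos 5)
Your proposal is correct and follows essentially the same route as the paper: compose $f$ with a nontrivial irrep $\sigma$ of $H$, use orthogonality of matrix coefficients (Plancherel) together with~\eqref{eq:f-uniform} to get $\bopnorm{\Exp_x \sigma(f(x))} \le \sqrt{\eps/d_\sigma}$, and then invoke Corollary~\ref{cor:prob} with $d_\psi = d_\sigma$ before minimizing over $\sigma$. Your explicit handling of the cube $\bopnorm{\Exp_x\psi(x)}^3$ versus the first power (via $\bopnorm{\Exp_x\psi(x)}\le 1$ or the trivial-bound case) fills in a step the paper leaves implicit, but it is the same argument.
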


\noindent
If $p_f$ is perfectly uniform so that $\eps=0$, this expression is minimized by $H$'s smallest nontrivial irrep $\sigma$.  In that case, $f$ cannot act very homomorphically if $H$ is much less quasirandom than $G$ is.  

Our second bound considers all of $H$'s irreps, not just the smallest one.  
%While it is weaker in some cases, it is stronger when almost all of $H$'s representations are smaller than $\dmin$.  
%Let $\wH$ denote the set of $H$'s irreps, and 
Recall that the \emph{Plancherel measure} assigns each irrep $\sigma \in \wH$ the probability $P(\sigma) = d_\sigma^2/|H|$.  If $R_H$ denotes the expectation of 
%$\sqrt{ \eps / d_\sigma } + 
$\sqrt{d_\sigma/\dmin}$ or $1$, whichever is smaller, 
\begin{equation}
\label{eq:rh}
R_H(\dmin)
= \sum_{\sigma \in \widehat{H}} \frac{d_\sigma^2}{|H|} 
\,\min\!\left( 
%\sqrt{ \frac{\eps}{d_\sigma} } + 
\sqrt{ \frac{d_\sigma}{\dmin} } \, , \; 1 \right) \, , 
\end{equation}
then we have the following.

\begin{theorem}
\label{thm:approx-hom-rh}
Let $G$ and $H$ be finite groups, and let $\dmin$ denote the dimension of $G$'s smallest nontrivial irrep.  Let $f: G \to H$ such that~\eqref{eq:f-uniform} holds.  Then
\[
\Pr[ f(xy) = f(x) \,f(y) ]
\le \norm{p_f}_2^2 + R_H(\dmin) = \frac{1 + \epsilon}{|H|} + R_H(\dmin)\, .
\]
\end{theorem}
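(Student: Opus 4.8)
The plan is to run harmonic analysis on $H$: expanding the collision indicator via Schur orthogonality turns the probability into a Plancherel-weighted sum of ``approximate-representation correlations,'' one per irrep of $H$, and then each of these is controlled by (a refinement of) Theorem~\ref{thm:approx-irreps} applied to $\psi_\sigma := \sigma\circ f : G \to \U_{d_\sigma}$. Using $\one[a=b] = \frac1{|H|}\sum_{\sigma\in\wH} d_\sigma\tr(\sigma(a)\,\sigma(b)^\dagger)$ together with $\sigma(f(x)f(y)) = \psi_\sigma(x)\,\psi_\sigma(y)$, and passing to real parts (which is free since the left-hand side is real), I would obtain
\[
\Pr[f(xy)=f(x)\,f(y)] \;=\; \frac{1}{|H|}\sum_{\sigma\in\wH} d_\sigma\, \Re c_\sigma,
\qquad c_\sigma := \Exp_{x,y}\inner{\psi_\sigma(x)\,\psi_\sigma(y)}{\psi_\sigma(xy)}_{\fsub} \,.
\]
Since each $\psi_\sigma$ is unitary-valued, $\frob{\psi_\sigma(z)}^2 = d_\sigma$ for every $z$, so polarization gives $\Re c_\sigma = d_\sigma - \tfrac12\Exp_{x,y}\frob{\psi_\sigma(xy)-\psi_\sigma(x)\,\psi_\sigma(y)}^2$; the trivial irrep contributes $d_1\Re c_1 = 1$, and the task reduces to bounding $\Re c_\sigma$ from above for each $\sigma\neq 1$.

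For this I want a marginally sharper statement than Theorem~\ref{thm:approx-irreps} as written. Re-examining its Fourier-analytic proof, the correlation $\Re c_\sigma$ decomposes over the irreps $\rho$ of $G$: the trivial $\rho$ contributes a cubic expression in $A_\sigma := \Exp_x\psi_\sigma(x)$ whose absolute value is at most $\opnorm{A_\sigma}\,\frob{A_\sigma}^2$ (since $|\tr(A_\sigma^2 A_\sigma^\dagger)| \le \frob{A_\sigma}\,\frob{A_\sigma^2} \le \opnorm{A_\sigma}\,\frob{A_\sigma}^2$), while the nontrivial $\rho$ --- all of dimension at least $\dmin$ --- contribute at most $d_\sigma\sqrt{d_\sigma/\dmin}$ in absolute value. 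These are exactly the two terms of Theorem~\ref{thm:approx-irreps}; the stated $\opnorm{\Exp_x\psi}^3$ is merely the coarsening of the trivial part via $\frob{A}^2 \le d_\psi\,\opnorm{A}^2$. Keeping the sharper trivial bound, using $\opnorm{A_\sigma}\le 1$ because $\psi_\sigma$ is unitary, and combining with the Cauchy--Schwarz bound $\Re c_\sigma \le d_\sigma$, I would get for every $\sigma\neq 1$
\[
\Re c_\sigma \;\le\; \min\!\Big( d_\sigma,\; \frob{A_\sigma}^2 + d_\sigma\sqrt{d_\sigma/\dmin}\Big) \;\le\; \frob{A_\sigma}^2 + d_\sigma\,\min\!\Big(1,\,\sqrt{d_\sigma/\dmin}\Big) \,,
\]
where the second step is the elementary inequality $\min(a,b+c)\le b+\min(a,c)$ for $a,b,c\ge 0$.

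Summing this over $\sigma$ should finish things. Since $A_\sigma = \Exp_x\sigma(f(x)) = \sum_{h\in H} p_f(h)\,\sigma(h) = \widehat{p_f}(\sigma)$, Plancherel on $H$ gives $\sum_\sigma d_\sigma\frob{A_\sigma}^2 = |H|\,\norm{p_f}_2^2$ (with the $\sigma=1$ term equal to $\frob{A_1}^2 = 1$), while by definition $\sum_\sigma d_\sigma^2\,\min(1,\sqrt{d_\sigma/\dmin}) = |H|\,R_H(\dmin)$. Plugging the displayed bound into $\Pr = \tfrac1{|H|} + \tfrac1{|H|}\sum_{\sigma\neq1} d_\sigma\Re c_\sigma$, peeling off the $\sigma=1$ terms, and discarding the nonnegative $\sigma=1$ contribution to the $R_H$-sum, the two copies of $\tfrac1{|H|}$ cancel and one is left with $\Pr[f(xy)=f(x)\,f(y)] \le \norm{p_f}_2^2 + R_H(\dmin)$; the stated form follows from $\norm{p_f}_2^2 = \norm{u}_2^2 + \norm{p_f-u}_2^2 \le \tfrac{1+\eps}{|H|}$ by~\eqref{eq:f-uniform}.

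The one genuinely load-bearing step is the refinement of Theorem~\ref{thm:approx-irreps}: a black-box use of its stated form would leave behind $\tfrac1{|H|}\sum_{\sigma\neq1} d_\sigma^2\,\opnorm{A_\sigma}^3$, and the extra factor $d_\sigma$ relative to the $\sum_\sigma d_\sigma\frob{A_\sigma}^2 = |H|\norm{p_f}_2^2$ that Plancherel supplies cannot in general be absorbed; it is precisely the improvement from $d_\psi\,\opnorm{\Exp_x\psi}^3$ to the honestly smaller $\opnorm{\Exp_x\psi}\,\frob{\Exp_x\psi}^2$ (which differ substantially unless $\Exp_x\psi$ is essentially rank one) that makes the estimate close up. I expect extracting this cleanly from the internals of the Theorem~\ref{thm:approx-irreps} argument to be the only real work; the remainder is bookkeeping with $\frob{XY}\le\opnorm{X}\frob{Y}$ and the $\min$ inequality above.
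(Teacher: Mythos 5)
Your proposal is correct and follows essentially the same route as the paper: both expand the collision indicator over $\widehat{H}$ via the regular representation, reuse the unsimplified intermediate bound from the proof of Theorem~\ref{thm:approx-irreps} (your ``refinement'' $\bopnorm{A_\sigma}\frob{A_\sigma}^2$ is literally the paper's equation~\eqref{eq:tr-bound-unsimp}), bound $\bopnorm{A_\sigma}\le 1$, and sum with Plancherel to obtain $\norm{p_f}_2^2 + R_H(\dmin)$. The only cosmetic differences are that you reach the $\min(\cdot,1)$ via $\Re c_\sigma \le d_\sigma$ and an elementary min inequality rather than via $\bopnorm{\wpsi(\rho^\dagger)}\le 1$, and you invoke Plancherel directly instead of routing through $\Pr[f(x)=f(y)]$.
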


\noindent
If $R_H$ and $\eps$ are small, i.e., if most of $H$'s irreps are much smaller than $\dmin$ and $f$'s image is close to uniform, Theorem~\ref{thm:approx-hom-rh} shows that $f$ cannot act like a homomorphism much more often than a random function from $G$ to $H$.

Finally, we give two results indicating that the bounds of Theorem~\ref{thm:approx-irreps} are essentially tight.  First we show that they are achieved exactly if $\psi$ is proportional to a minor of a genuine irreducible representation.  While these minors are not unitary, we can scale them so that they are unitary in expectation in the sense of~\eqref{eq:cond}.

\begin{theorem}
\label{thm:minor-tight}
Let $\rho: G \rightarrow \U(V)$ be an irreducible representation of $G$ of dimension $d_\rho$ and let $\Pi: V \rightarrow V$ be a projection operator onto a $d_\psi$-dimensional subspace $W \subseteq V$.  If we define
\[
\psi(x) = \sqrt{\frac{d_\rho}{d_\psi}} \,\Pi \rho(x) \Pi \, ,
\] 
then $\Exp_x \psi(x)=0$, $\Exp_x \psi(x)^\dagger \psi(x) = \Pi$, and 
\[
\Exp_{x,y \in G} \norm{\psi(xy) - \psi(x) \,\psi(y)}_{\fsub}^2 
= 2d_\psi \left( 1 - \sqrt{\frac{d_\psi}{d_\rho}} \right) \, .
\]
\end{theorem}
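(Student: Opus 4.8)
The plan is to compute the left-hand side directly by expanding the Frobenius norm and using the orthogonality relations for the irreducible representation $\rho$. Write
\[
\Exp_{x,y} \norm{\psi(xy) - \psi(x)\psi(y)}_{\fsub}^2
= \Exp_{x,y} \norm{\psi(xy)}_{\fsub}^2
+ \Exp_{x,y} \norm{\psi(x)\psi(y)}_{\fsub}^2
- 2\Re \Exp_{x,y} \tr\!\bigl( \psi(xy)^\dagger \psi(x)\psi(y) \bigr) \, .
\]
The first term equals $\Exp_z \norm{\psi(z)}_{\fsub}^2$ since $z = xy$ is uniform when $x,y$ are, and $\Exp_x \norm{\psi(x)}_{\fsub}^2 = \tr \Exp_x \psi(x)^\dagger \psi(x) = \tr \Pi = d_\psi$. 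So the first term is $d_\psi$. The remaining two terms are what require the structure of $\rho$.

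First I would set up the Fourier/averaging tool. For a fixed matrix $M$ acting on $V$, Schur's lemma gives $\Exp_x \rho(x) M \rho(x)^\dagger = \frac{\tr M}{d_\rho}\,\one_V$, since the left side is an intertwiner of the irreducible $\rho$ with itself. I will apply this identity (and its close variant $\Exp_x \rho(x)^\dagger M \rho(x) = \frac{\tr M}{d_\rho}\one_V$) repeatedly. For the cross term, substitute $\psi = \sqrt{d_\rho/d_\psi}\,\Pi\rho\Pi$, use $\rho(xy) = \rho(x)\rho(y)$, and reduce to an expression of the form $\frac{d_\rho}{d_\psi}\,\Exp_{x,y}\tr\bigl(\Pi\rho(y)^\dagger\rho(x)^\dagger\Pi\,\Pi\rho(x)\Pi\,\Pi\rho(y)\Pi\bigr)$. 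Averaging over $x$ with $y$ fixed, the inner average $\Exp_x \rho(x)^\dagger \Pi \rho(x)$ collapses via Schur to $\frac{\tr\Pi}{d_\rho}\one = \frac{d_\psi}{d_\rho}\one$; then averaging over $y$ and tracking the projections carefully, the cross term should come out to $2\sqrt{d_\psi/d_\rho}\cdot d_\psi$ after taking the real part (it is already real). Similarly, the middle term $\Exp_{x,y}\norm{\psi(x)\psi(y)}_{\fsub}^2 = \frac{d_\rho^2}{d_\psi^2}\Exp_{x,y}\tr\bigl(\Pi\rho(y)^\dagger\Pi\rho(x)^\dagger\Pi\rho(x)\Pi\rho(y)\Pi\bigr)$ is handled by first averaging over $x$: $\Exp_x \rho(x)^\dagger\Pi\rho(x) = \frac{d_\psi}{d_\rho}\one$, leaving $\frac{d_\rho^2}{d_\psi^2}\cdot\frac{d_\psi}{d_\rho}\Exp_y\tr\bigl(\Pi\rho(y)^\dagger\Pi\rho(y)\Pi\bigr) = \frac{d_\rho}{d_\psi}\Exp_y\tr(\Pi\rho(y)^\dagger\Pi\rho(y))$, and averaging over $y$ the same way gives $\frac{d_\rho}{d_\psi}\cdot\frac{d_\psi}{d_\rho}\tr\Pi = d_\psi$. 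Assembling, the total is $d_\psi + d_\psi - 2\sqrt{d_\psi/d_\rho}\,d_\psi = 2d_\psi(1-\sqrt{d_\psi/d_\rho})$, as claimed. The auxiliary claims $\Exp_x\psi(x) = \sqrt{d_\rho/d_\psi}\,\Pi(\Exp_x\rho(x))\Pi = 0$ and $\Exp_x\psi(x)^\dagger\psi(x) = \frac{d_\rho}{d_\psi}\Pi(\Exp_x\rho(x)^\dagger\Pi\rho(x))\Pi = \frac{d_\rho}{d_\psi}\cdot\frac{d_\psi}{d_\rho}\Pi = \Pi$ follow from the same two applications of Schur, using that $\rho$ is nontrivial so $\Exp_x\rho(x) = 0$.

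The main obstacle I anticipate is the bookkeeping in the cross term: one must be careful about the order in which the averages over $x$ and $y$ are taken, since $\Pi$ does not commute with $\rho(x)$ or $\rho(y)$, and a naive application of Schur's lemma to the wrong sub-average will give the wrong power of $d_\psi/d_\rho$. The safe route is to fix $y$, expand $\psi(xy) = \sqrt{d_\rho/d_\psi}\,\Pi\rho(x)\rho(y)\Pi$, group the $x$-dependence into a single conjugation $\rho(x)^\dagger(\cdots)\rho(x)$ sandwiched appropriately, apply Schur to get a scalar multiple of the identity, and only then average over $y$. I would double-check the final constant by specializing to $W = V$ (so $\Pi = \one$, $d_\psi = d_\rho$, $\psi = \rho$ a genuine representation), where the left side must be exactly $0$ — and indeed $2d_\rho(1-\sqrt{d_\rho/d_\rho}) = 0$, which is reassuring.
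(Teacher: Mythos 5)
Your proposal is correct and follows essentially the same route as the paper: expand $\norm{\psi(xy)-\psi(x)\psi(y)}_{\fsub}^2$ into the two norm-squared terms (each equal to $d_\psi$ by unitarity in expectation) plus the cross term, and evaluate the cross term by applying Schur's lemma in the form $\Exp_x \rho(x)^\dagger \Pi \rho(x) = (d_\psi/d_\rho)\,\one$, yielding $d_\psi\sqrt{d_\psi/d_\rho}$ and hence the stated identity. The only cosmetic difference is that you average over $x$ and then $y$ sequentially, while the paper factors the averaged trace into the two Schur averages at once.
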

\noindent
Note that this precisely matches our upper bound in Theorem~\ref{thm:approx-irreps} in the case $\Exp_x \psi(x) = 0$.

In our last result, we use the polar decomposition to make these
approximate representations unitary.  This comes at some cost to the
expected Frobenius norm, but there is still a regime for $d_\psi /
d_\rho$
% bounded below $1$
where we can achieve significantly stronger results than those of a
random function.  First recall that if $A$ is a $d$-dimensional
complex matrix of full rank, its \emph{polar decomposition} expresses
$A$ as the product of a unitary matrix $\tA$ and a positive
semidefinite matrix
\[
A = \tA P \, ,
\]
where
\[
\tA = A (A^\dagger A)^{-1/2} 
\quad \text{and} \quad
P = (A^\dagger A)^{1/2} \, .
\]
That is, $\tA = AP^{-1}$ where $P$ is the unique positive semidefinite matrix such that $P^2 = A^\dagger A$.  It is a simple exercise to show that $\tA$ is unitary.  More importantly, $\tA$ is the unitary matrix which is closest to $A$ in $\ell_2$ distance~\cite{fan-hoffman}.  

Then we have the following theorem.  Note that unlike Theorem~\ref{thm:minor-tight}, we now assume that $\Pi$ is chosen uniformly.  Specifically, given a fixed projection operator $\Pi'$ of rank $d_\psi$, we set $\Pi = U^\dagger \Pi' U$ where $U \in \U(V)$ is uniform according to the Haar measure.

\begin{theorem}
\label{thm:polar}
Let $\rho: G \rightarrow \U(V)$ be an irreducible representation of $G$ of dimension $d_\rho$ and let $\Pi: V \rightarrow V$ be a projection operator onto a subspace $W \subseteq V$ chosen uniformly from all projection operators of rank $d_\psi$.  Let $\psi(x)$ be defined as in Theorem~\ref{thm:minor-tight}, and let $\tpsi(x)$ be the unitary part of its polar decomposition.  Then
\[
\Exp_{\Pi} \Exp_{x,y \in G} \norm{\tpsi(xy) - \tpsi(x) \,\tpsi(y)}_{\fsub}^2 
\le 2 d_\psi \left( 4 \left( 1 - \sqrt{\frac{d_\psi}{d_\rho}} \right) + 6 \left( 1-\frac{d_\psi}{d_\rho} \right) \right) 
\, .
\]
It follows that there exists a particular projection operator $\Pi$ satisfying
the bound above.
\end{theorem}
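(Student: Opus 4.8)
The plan is to control the unitarized map $\tpsi$ in terms of the minor $\psi$ of Theorem~\ref{thm:minor-tight}. Write $\tpsi(z)=\psi(z)+E(z)$, where $E(z):=\tpsi(z)-\psi(z)$ measures how far $\psi(z)$ is from unitary. Substituting and regrouping via $E(x)\psi(y)+E(x)E(y)=E(x)\tpsi(y)$ gives
\[
\tpsi(xy)-\tpsi(x)\tpsi(y)=\bigl(\psi(xy)-\psi(x)\psi(y)\bigr)+E(xy)-E(x)\tpsi(y)-\psi(x)E(y)\,.
\]
I would bound the squared Frobenius norm of the right side by $4$ times the sum of the squared norms of the four pieces and then average over $x,y$ and over $\Pi$. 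The first piece averages to $2d_\psi(1-\sqrt{d_\psi/d_\rho})$ by Theorem~\ref{thm:minor-tight}, independently of $\Pi$. Setting $z=xy$ shows the second averages to $\mathcal E:=\Exp_\Pi\Exp_z\norm{E(z)}_\fsub^2$. Since $\tpsi(y)$ is unitary, $\norm{E(x)\tpsi(y)}_\fsub=\norm{E(x)}_\fsub$, so the third also averages to $\mathcal E$; and $\Exp_x\norm{\psi(x)E(y)}_\fsub^2=\tr\bigl[E(y)^\dagger(\Exp_x\psi(x)^\dagger\psi(x))E(y)\bigr]=\norm{E(y)}_\fsub^2$ because $\Exp_x\psi(x)^\dagger\psi(x)=\Pi$ (Theorem~\ref{thm:minor-tight}) and $E(y)=\Pi E(y)$, so the fourth too averages to $\mathcal E$. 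Hence
\[
\Exp_\Pi\Exp_{x,y}\norm{\tpsi(xy)-\tpsi(x)\tpsi(y)}_\fsub^2\le 8d_\psi\Bigl(1-\sqrt{\tfrac{d_\psi}{d_\rho}}\Bigr)+12\,\mathcal E\,,
\]
so the theorem follows once we show $\mathcal E\le d_\psi(1-d_\psi/d_\rho)$.

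For that I would use the polar decomposition: $\psi(z)=\tpsi(z)P(z)$ with $P(z)=(\psi(z)^\dagger\psi(z))^{1/2}$, so $\norm{E(z)}_\fsub^2=\norm{\one_W-P(z)}_\fsub^2=d_\psi-2\norm{\psi(z)}_*+\norm{\psi(z)}_\fsub^2$, where $\norm{\cdot}_*$ denotes the nuclear norm. Averaging, and again using $\Exp_z\norm{\psi(z)}_\fsub^2=d_\psi$, gives $\mathcal E=2d_\psi-2\,\Exp_\Pi\Exp_z\norm{\psi(z)}_*$, so the bound on $\mathcal E$ is equivalent to the \emph{lower} bound
\[
\Exp_\Pi\Exp_z\norm{\psi(z)}_*=\sqrt{\tfrac{d_\rho}{d_\psi}}\;\Exp_\Pi\Exp_z\norm{\Pi\rho(z)\Pi}_*\ \ge\ \tfrac{d_\psi}{2}\Bigl(1+\tfrac{d_\psi}{d_\rho}\Bigr)\,.
\]
This nuclear-norm lower bound is the heart of the proof. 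I would derive it from the moments of the positive operator $M(z):=\psi(z)^\dagger\psi(z)=(d_\rho/d_\psi)\,\Pi\rho(z)^\dagger\Pi\rho(z)\Pi$, whose eigenvalues lie in $[0,d_\rho/d_\psi]$: a polynomial lower bound $\sqrt\nu\ge c_1\nu-c_2\nu^2+\cdots$ valid on that interval turns $\Exp_\Pi\Exp_z\tr\sqrt{M(z)}$ into a polynomial in the moments $m_k:=\Exp_\Pi\Exp_z\tr\bigl[(\Pi\rho(z)^\dagger\Pi\rho(z)\Pi)^k\bigr]$. Here $m_1=d_\psi^2/d_\rho$ by Schur orthogonality ($\Exp_z\rho(z)^\dagger A\rho(z)=(\tr A/d_\rho)\,\one$, since $\rho$ is irreducible), and the higher $m_k$ I would evaluate by first performing the $z$-average, which replaces $\Exp_z[\rho(z)^{\otimes k}\otimes\overline{\rho(z)}^{\,\otimes k}]$ by the orthogonal projection onto the $G$-invariant subspace of $V^{\otimes k}\otimes\overline{V}^{\otimes k}$, and then the $\Pi$-average by Weingarten calculus for the unitary group.

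The step I expect to be the main obstacle is exactly this moment computation. Unlike Theorem~\ref{thm:minor-tight}, where only $m_1$ enters, the non-unitarity of the minor forces one to handle $\tr\sqrt{M(z)}$ with care: the crude estimate $(1-\lambda)^2\le(1-\lambda^2)^2$ is far too lossy, since $\psi(z)$ can have singular values as large as $\sqrt{d_\rho/d_\psi}$ — for instance whenever $\rho(z)$ is scalar, as for central elements of groups like the Heisenberg groups — so the scalar inequality (and hence how many $m_k$ to compute) must be chosen so that the bound is tight enough both near $\lambda\approx1$ and for large $\lambda$. Tracking the $G$-dependent invariant multiplicities through the Weingarten average and verifying that they combine with the $\Pi$-average to leave a bound depending only on $d_\psi$ and $d_\rho$ is the delicate part. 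Once $\mathcal E\le d_\psi(1-d_\psi/d_\rho)$ is in hand, the final assertion is immediate: the left side of the displayed inequality in the theorem is the $\Pi$-average of a nonnegative quantity, so some projection $\Pi$ attains a value no larger than that average.
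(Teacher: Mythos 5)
Your reduction is sound and in fact parallels the paper's: the regrouping of $\tpsi(xy)-\tpsi(x)\,\tpsi(y)$ into four pieces, the use of $(a+b+c+d)^2\le 4(a^2+b^2+c^2+d^2)$, the handling of the $\psi(x)E(y)$ term via $\Exp_x \psi(x)^\dagger\psi(x)=\Pi$, and the resulting target $\mathcal E\le d_\psi\left(1-\frac{d_\psi}{d_\rho}\right)$ all match the paper's structure (the paper reaches the same four-term bound by two triangle inequalities rather than by algebraic expansion). But the proposal stops short of the theorem's actual content: the bound on $\mathcal E$, equivalently your nuclear-norm lower bound, is exactly the step you defer as ``the main obstacle,'' and it is precisely what the bulk of the paper's proof consists of. Concretely, the paper bounds $\frob{\psi(x)-\tpsi(x)}^2\le\frob{\psi(x)^\dagger\psi(x)-\Pi}^2$ via the scalar inequality $(\lambda-1)^2\le(\lambda^2-1)^2$ and then evaluates $\Exp_{\Pi,x}\frob{\psi(x)^\dagger\psi(x)}^2$ --- your $m_2$ up to the factor $(d_\rho/d_\psi)^2$ --- by writing the twirl $\Exp_U (U^\dagger\Pi U)^{\otimes 4}$ as a class function in $\C[S_4]$, computing its coefficients $\upsilon(\pi)$, identifying the leading contractions, and controlling the remaining ones through $\Exp_x\abs{\chi_\rho(x)}^4$ and the Frobenius--Schur indicator. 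Without carrying out this moment computation (or some substitute for it), no bound depending only on $d_\psi/d_\rho$ is established, so the proposal does not prove the theorem.

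Two further remarks. First, your worry that the ``crude'' estimate $(1-\lambda)^2\le(1-\lambda^2)^2$ is too lossy and that higher moments $m_k$, $k\ge 3$, or a more carefully tuned scalar inequality might be needed is unfounded: that inequality is exactly the quadratic bound $\sqrt{\nu}\ge\frac{3}{2}\nu-\frac{1}{2}\nu^2$ (valid for all $\nu\ge 0$) in your framework, and combined with $\Exp\frob{\psi}^2=d_\psi$ and $\Exp_{\Pi,x}\frob{\psi^\dagger\psi}^2=d_\psi\left(2-\frac{d_\psi}{d_\rho}\right)+O(1)$ it yields precisely $\Exp\norm{\psi(z)}_*\ge\frac{d_\psi}{2}\left(1+\frac{d_\psi}{d_\rho}\right)$ and hence $\mathcal E\le d_\psi\left(1-\frac{d_\psi}{d_\rho}\right)$, i.e., only $m_2$ is required. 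Second, be aware that even the paper's evaluation of this moment is asymptotic: the error terms are $O(1)$ as $d_\rho\to\infty$ with $d_\psi/d_\rho$ fixed, and the $G$-dependence enters only through the multiplicity $m\le d_\rho^2$ bounded via $\Exp_x\abs{\chi_\rho(x)}^4$ and $\Exp_x\abs{\chi_\rho(x^2)}^2$; so the ``delicate part'' you flag is genuinely where the group-theoretic input lives, and it is the part your proposal leaves undone.
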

The difference between Theorems~\ref{thm:minor-tight} and~\ref{thm:polar} is the cost of making $\psi(x)$ unitary---it comes from bounding the expected $\ell_2$ distance between $\psi(x)$ and $\tpsi(x)$ and using the triangle inequality.  While it is intuitive that this cost is nonzero, we have not attempted to optimize this bound.  Nevertheless, even this relatively crude bound shows that there exist unitary approximate representations that perform noticeably better than random matrices---that is, for which 
\[
\frac{1}{2 d_\psi} \,\Exp_{x,y \in G} \norm{\tpsi(xy) - \tpsi(x) \,\tpsi(y)}_{\fsub}^2 \le \alpha 
\]
for some $\alpha < 1$,  whenever 
\[
d_\psi / d_\rho > \frac{31-2\sqrt{58}}{18} = 0.876\ldots 
\]
We conjecture that approximate representations exist with $\alpha < 1$ whenever $d_\psi / d_\rho > 0$.

Proofs are given in the following three sections.

\section{Bounds on approximate representations}

In this section we prove Theorem~\ref{thm:approx-irreps} and Corollary~\ref{cor:prob}.  In the process, 
%In our proof of Theorem~\ref{thm:approx-irreps} 
we set our conventions for the nonabelian Fourier transform, and prove several inequalities that we will apply later on.  
%We also carry out some sanity checks in the case where $\psi$ is a genuine irrep.

\begin{proof}[{\bf Proof of Theorem~\ref{thm:approx-irreps}}]
For any $A, B$ we have
\[
\norm{A-B}_{\fsub}^2 
= \tr (A-B)^\dagger (A-B) 
= \norm{A}_{\fsub}^2 + \norm{B}_{\fsub}^2 - 2 \Re \tr A^\dagger B \, . 
%= 2d \left(1 - \frac{1}{d} \Re \tr A^\dagger B \right)
\]
Since $z=xy$ is uniformly random whenever $x$ and $y$ are, 
\begin{equation}
\label{eq:alternately}
\Exp_{x,y} \norm{\psi(xy) - \psi(x) \,\psi(y)}_{\fsub}^2 
= \Exp_{z} \frob{\psi(z)}^2 + \Exp_{x,y} \frob{\psi(x) \,\psi(y)}^2 - 2 \Re \Exp_{x,y} \tr \psi(xy)^\dagger \,\psi(x) \,\psi(y) \, . 
\end{equation}
If $\psi$ is unitary in expectation, \eqref{eq:cond} implies
\begin{equation}
\label{eq:frob-z}
\Exp_{z} \frob{\psi(z)}^2 = \tr \Exp_z \psi(z)^\dagger \psi(z) = \tr \one = d_\psi \, ,
\end{equation}
and of course this holds identically if $\psi$ is unitary.  Similarly, 
\begin{align*}
\Exp_{x,y} \frob{\psi(x) \,\psi(y)}^2
&= \tr \Exp_{x,y} \psi(y)^\dagger \psi(x)^\dagger \psi(x) \psi(y) \\
&= \tr \left( \Exp_x \psi(x)^\dagger \psi(x) \right) \left( \Exp_y \psi(y) \psi(y)^\dagger \right) 
= \tr \one = d_\psi \, .
\end{align*}
Then~\eqref{eq:alternately} becomes
\begin{equation}
\label{eq:dist-trace}
\Exp_{x,y} \norm{\psi(xy) - \psi(x) \,\psi(y)}_{\fsub}^2 
= 2d_\psi \left(1 - \frac{1}{d_\psi} \Re \Exp_{x,y} \tr \psi^\dagger(xy) \,\psi(x) \,\psi(y) \right) \, .
\end{equation}
Thus we will focus on estimating the expected trace
\begin{equation}
\label{eq:exp}
\Exp_{x,y} \tr \psi^\dagger(xy) \,\psi(x) \,\psi(y) \, .
\end{equation}
Note that if $\psi$ is a genuine representation, $\psi^\dagger(xy) \,\psi(x) \,\psi(y) = \one$ and this trace is identically $d_\psi$.

We rely on nonabelian Fourier analysis, for which we refer the reader to~\cite{Serre77}.  In order to establish our notation and choice of normalizations, let $f: G \rightarrow \C$ and let $\rho: G \rightarrow \U_d$ be an irreducible unitary representation of $G$ or ``irrep'' for short, and let $\wG$ denote the set of irreps of $G$.  We adopt the Fourier transform
\[
\wf(\rho) = \frac{1}{|G|} \sum_{x \in G} f(x) \,\rho^\dagger(x)
= \Exp_x f(x) \,\rho^\dagger(x) \, ,
\]
in which case we have the Fourier inversion formula
\[
f(x) = \sum_{\rho \in \wG} d_\rho \tr \left( \wf(\rho) \,\rho(x) \right) \, .
\]
The Fourier transform preserves inner products in the sense that
\begin{equation}
\label{eq:inner}
\langle f, g \rangle = \sum_x f(x)^* g(x) 
= |G| \sum_\rho d_\rho \tr\left( \wf(\rho)^\dagger \,\wg(\rho) \right) \, .
\end{equation}
In particular, we have Plancherel's identity,
\begin{equation}
\label{eq:planch}
\norm{f}_2^2 = \sum_x \abs{f(x)}^2 
= |G| \sum_\rho d_\rho \bigl\|\wf(\rho)\bigr\|_{\fsub}^2 \, .
\end{equation}
%where for matrices $\norm{\cdot}_2^2$ denotes the Frobenius norm,
%\[
%\norm{A}_2^2 = \tr A^\dagger A = \sum_{i,j} \abs{A_{ij}}^2 \, .
%\]

Since $\psi$ is a matrix-valued function, each entry $\psi(x)^i_j$ has
its own Fourier transform.  Therefore, we may treat the Fourier transform $\wpsi$ as a tensor with four indices,
\[
\wpsi(\rho)^{ik}_{j\ell} = \Exp_x \psi(x)^i_j \,\rho^\dagger(x)^k_\ell 
\quad \text{or} \quad 
\wpsi(\rho) = \Exp_x \left[ \psi(x) \otimes \rho^\dagger(x) \right] \, . 
\]
The Fourier inversion formula can then be expressed as a partial trace. We adopt the Einstein summation convention, where any index appearing twice is automatically summed over.  For instance, $(AB)^i_j = A^i_k B^k_j$ and $\tr A = A^i_i$.  Then
\[
\psi(x)^i_j = \sum_\rho d_\rho \,\wpsi(\rho)^{ik}_{j\ell} \,\rho(x)^\ell_k \, .
\]
Plancherel's identity becomes
\begin{equation}
\label{eq:psi-planch}
\Exp_x \norm{\psi(x)}_{\fsub}^2 
= \sum_\rho d_\rho \norm{\wpsi(\rho)}_{\fsub}^2
= \sum_\rho d_\rho \sum_{i,j,k,l} \abs{\wpsi(\rho)^{ik}_{j\ell}}^2 \, .
\end{equation}

%To illustrate these conventions and perform a sanity check, let's assume for a moment that $\psi$ is actually an irrep.  Schur's lemma then implies that its Fourier transform is the exchange operator at $\rho=\psi$ and zero elsewhere, 
%\[
%\wpsi(\rho)^{ik}_{j\ell} 
%= \Exp_x \left[ \psi(x)^i_j \otimes \rho^\dagger(x)^k_\ell \right]
%= \frac{1}{d_\psi} 
%\begin{cases} 
%\delta^i_\ell \,\delta^j_k & \rho=\psi \\
%0 & \rho \ne \psi \, .
%\end{cases}
%\]
%Here $\delta^i_\ell$ is the Kronecker delta function---as a linear operator, the identity matrix---which is $1$ if $i=\ell$ and $0$ otherwise.  Checking the Fourier inversion formula, we get
%\[
%\psi(x)^i_j 
%= d_\psi \,\wpsi(\psi)^{ik}_{j\ell} \,\psi(x)^\ell_k 
%= \delta^i_\ell \,\delta^j_k \,\psi(x)^\ell_k 
%= \psi(x)^i_j \, ,
%\]
%as we should.

%Returning to our proof, we
We compute the trace~\eqref{eq:exp} by evaluating it in the Fourier basis.  First write
\begin{align}
\tr \psi^\dagger(xy) \,\psi(x) \,\psi(y) 
&= \psi^\dagger(xy)^i_j \,\psi(x)^j_k \,\psi(y)^k_i \nonumber \\
&= \sum_{\rho, \sigma, \tau \in \wG} d_\rho \,d_\sigma \,d_\tau 
\,\wpsidag(\rho)^{ia}_{jb} \,\wpsi(\sigma)^{jd}_{ke} \,\wpsi(\tau)^{kf}_{ig} 
\,\rho(xy)^b_a \,\sigma(x)^e_d \,\tau(y)^g_k \nonumber \\
&= \sum_{\rho, \sigma, \tau \in \wG} d_\rho \,d_\sigma \,d_\tau 
\,\wpsidag(\rho)^{ia}_{jb} \,\wpsi(\sigma)^{jd}_{ke} \,\wpsi(\tau)^{kf}_{ig} 
\,\rho(x)^b_c \,\rho(y)^c_a \,\sigma(x)^e_d \,\tau(y)^g_f \, .
\label{eq:tr1}
\end{align}
Schur's lemma implies
\begin{equation}
\label{eq:cupcap0}
\Exp_x \left[ \rho(x)^b_c \,\sigma(x)^e_d \right]
= \frac{1}{d_\rho} 
\begin{cases} 
\delta^{be} \,\delta_{cd} & \sigma=\rho^* \\
0 & \sigma \ne \rho^* \, .
\end{cases}
%\delta^{be} \,\delta_{cd} \quad \text{if} \quad \sigma=\rho^* \, ,
\end{equation}
Thus taking the expectation over $x$ and $y$ turns~\eqref{eq:tr1} into
\begin{align*}
\Exp_{x,y} \tr \psi^\dagger(xy) \,\psi(x) \,\psi(y)
&= \sum_\rho d_\rho 
\,\wpsidag(\rho)^{ia}_{jb} \,\wpsi(\rho^*)^{jd}_{ke} \,\wpsi(\rho^*)^{kf}_{ig} 
\,\delta^{be} \,\delta_{cd} \,\delta^{cg} \,\delta_{af} \\
&= \sum_\rho d_\rho 
\,\wpsidag(\rho)^{ia}_{jb} \,\wpsi(\rho^*)^{jd}_{kb} \,\wpsi(\rho^*)^{ka}_{id} \, . 
\end{align*}
We rearrange this slightly, writing $\wpsi(\rho^\dagger)^{ik}_{j\ell}$ for the partial transpose $\wpsi(\rho^*)^{i\ell}_{jk}$.  Then 
\begin{equation}
\label{eq:wpsi}
\wpsi(\rho^\dagger) = \Exp_x \left[ \psi(x) \otimes \rho(x) \right] \, ,
\end{equation}
and
\begin{equation}
\label{eq:tr2}
\Exp_{x,y} \tr \psi^\dagger(xy) \,\psi(x) \,\psi(y)
= \sum_\rho d_\rho 
\,\wpsidag(\rho)^{ia}_{jb} \,\wpsi(\rho^\dagger)^{jb}_{kd} \,\wpsi(\rho^\dagger)^{kd}_{ia} \, . 
\end{equation}
If we view $\wpsi(\rho^\dagger)$ as a linear operator on $\C^d \otimes \C^{d_\rho}$, then 
\[
\wpsidag(\rho) 
= \left( \wpsi(\rho^\dagger) \right)^\dagger 
= \Exp_x \left[ \psi^\dagger(x) \otimes \rho^\dagger(x) \right] \, ,
\] 
and we can write
\begin{equation}
\label{eq:tr3}
\Exp_{x,y} \tr \psi^\dagger(xy) \,\psi(x) \,\psi(y)
= \sum_\rho d_\rho \tr \,\wpsi(\rho^\dagger) \,\left( \wpsi(\rho^\dagger) \right)^\dagger \,\wpsi(\rho^\dagger) \, .
\end{equation}

%As a sanity check, we again consider the case where $\psi$ is an irrep.  In that case, the only irrep contributing to the sum~\eqref{eq:tr3} is $\psi^*$, since as in~\eqref{eq:cupcap0} we have
We remark that in the case where $\psi$ is an irrep the only irrep contributing to the sum~\eqref{eq:tr3} is $\psi^*$ since, as in~\eqref{eq:cupcap0}, we have
\[
\wpsi(\rho^\dagger)^{ik}_{j\ell} 
= \Exp_x \left[ \psi(x)^i_j \,\rho(x)^k_\ell \right] 
= \frac{1}{d_\psi} 
\begin{cases}
\delta^{ik} \,\delta_{j\ell} & \rho=\psi^* \, , \\
0 & \rho \ne \psi^* \, .
\end{cases}
\]

Let $\Pi$ denote the operator $(1/d_\psi) \,\delta^{ik}
\,\delta_{j\ell}$.  Diagrammatically, $\Pi$ is proportional to the
``cupcap.''  It is a one-dimensional projection operator, equal to the
outer product of the
% maximally-entangled
vector 
\[
(1/\sqrt{d_\psi}) \sum_i e_i \otimes e_i
\]
with itself, where $e_i$ denotes the $i$th basis vector.  Since $\Pi$
is Hermitian, \eqref{eq:tr3} implies that
\[
\Exp_{x,y} \tr \psi^\dagger(xy) \,\psi(x) \,\psi(y) = d_\psi \tr \Pi^3 = d_\psi \tr \Pi = d_\psi
\]
which holds since $\psi^\dagger(xy) \,\psi(x) \,\psi(y) = \one$. 
%, and $\psi(xy) = \psi(x) \,\psi(y)$.

Returning to~\eqref{eq:tr3}, since $A^\dagger A$ is positive for any $A$ we have
\[
\tr A A^\dagger A \le \opnorm{A} \norm{A}_{\fsub}^2 \, .
\]
%where $\opnorm{\cdot}$ denotes the operator norm.  
%\[
%\opnorm{A} = \max_u \frac{\inner{u}{Au}}{\inner{v}{v}} \, . 
%\]
This gives
\begin{equation}
\label{eq:tr-trivial}
\Exp_{x,y} \tr \psi^\dagger(xy) \,\psi(x) \,\psi(y)
\le \sum_\rho d_\rho \,\bopnorm{\wpsi(\rho^\dagger)} \,\bnorm{\wpsi(\rho^\dagger)}_{\fsub}^2 \, . 
\end{equation}
We separate out the term corresponding to the trivial representation $\rho=1$, for which $\wpsi(1) = \Exp_x \psi(x)$.  Since $\frob{A}^2 \le d \opnorm{A}^2$ for any $d$-dimensional matrix $A$, we have
\begin{align}
%\Exp_{x,y} \tr \psi^\dagger(xy) \,\psi(x) \,\psi(y) & \;-\; d_\psi \bopnorm{ \Exp_x \psi(x) } \\
%& \le \sum_{\rho \ne 1} d_\rho \bopnorm{\wpsi(\rho^\dagger)} \bnorm{\wpsi(\rho^\dagger)}_{\fsub}^2 \nonumber \\
%& \le \left( \max_{\rho \ne 1} \bopnorm{\wpsi(\rho^\dagger)} \right) \sum_\rho d_\rho \bnorm{\wpsi(\rho^\dagger)}_{\fsub}^2 \nonumber \\
%& = \left( \max_{\rho \ne 1} \bopnorm{\wpsi(\rho^\dagger)} \right) \Exp_x \bnorm{\psi(x)}_{\fsub}^2 \nonumber \\
%& = d_\psi \max_{\rho \ne 1} \bopnorm{\wpsi(\rho^\dagger)} \, , 
\Exp_{x,y} \tr \psi^\dagger(xy) \,\psi(x) \,\psi(y) 
& \le \bopnorm{ \Exp_x \psi(x) } \,\bnorm{ \Exp_x \psi(x) )}_{\fsub}^2
+ \sum_{\rho \ne 1} d_\rho \,\bopnorm{\wpsi(\rho^\dagger)} \,\bnorm{\wpsi(\rho^\dagger)}_{\fsub}^2 \label{eq:tr-bound-unsimp} \\
& \le d_\psi \bopnorm{ \Exp_x \psi(x) }^3 
+ \left( \max_{\rho \ne 1} \bopnorm{\wpsi(\rho^\dagger)} \right) \sum_\rho d_\rho \,\bnorm{\wpsi(\rho^\dagger)}_{\fsub}^2 \nonumber \\
& = d_\psi \bopnorm{ \Exp_x \psi(x) }^3 
+ \left( \max_{\rho \ne 1} \bopnorm{\wpsi(\rho^\dagger)} \right) \Exp_x \bnorm{\psi(x)}_{\fsub}^2 \nonumber \\
& = d_\psi \left( \bopnorm{ \Exp_x \psi(x) }^3
+ \max_{\rho \ne 1} \bopnorm{\wpsi(\rho^\dagger)} \right) \, , 
\label{eq:tr-bound}
\end{align}
where we used Plancherel's identity~\eqref{eq:psi-planch} in the third line and the fact that $\Exp_x \bnorm{\psi(x)}_{\fsub}^2 = d_\psi$ is unitary, or unitary in expectation, in the fourth.  This is analogous to the Fourier-analytic treatment of the Blum-Luby-Rubinfeld linearity test~\cite{blum-luby-rubinfeld,bellare-et-al}.  

Our next goal is to bound the operator norm of $\wpsi(\rho^\dagger)$.
Let $V$ and $W$ denote the spaces on which $\psi$ and $\rho$ act,
respectively.  Then $\bopnorm{ \wpsi(\rho^\dagger) }$ is the maximum, taken over all vectors $u \in V \otimes W$ of norm $1$, of $\inner{u}{\wpsi(\rho^\dagger) \,u}$.  Using the Schmidt decomposition we can write
\[
u = \sum_i \alpha_i v_i \otimes w_i
\]
where $\{v_i\}$ and $\{w_i\}$ are orthogonal bases for $V$ and a $d_\psi$-dimensional subspace of $W$ respectively, and where $\sum_i \abs{\alpha_i}^2 = 1$.  Then separating the tensor product and using Cauchy-Schwarz gives
\begin{align*}
\inner{u}{\wpsi(\rho^\dagger) \,u}
&= \sum_{i,j} \alpha_i^* \alpha_j \inner{v_i \otimes w_i}{\wpsi(\rho^\dagger) \,v_j \otimes w_j} \\
&= \sum_{i,j} \alpha_i^* \alpha_j \inner{v_i \otimes w_i}{ \left( \Exp_x \left[ \psi(x) \otimes \rho(x) \right] \right) \,v_j \otimes w_j} \\
&= \sum_{i,j} \alpha_i^* \alpha_j \Exp_x \left[ \inner{v_i}{\psi(x) \,v_j} \inner{w_i}{\rho(x) \,w_j} \right] \\
&\le \sum_{i,j} \alpha_i^* \alpha_j \sqrt{ \left( \Exp_x \inner{v_i}{\psi(x) \,v_j}^2 \right) \left( \Exp_x \inner{w_j}{\rho(x) \,w_j}^2 \right) } 
\, .
\end{align*}
By Schur's lemma we have $\Exp_x \inner{w_i}{\rho(x) \,w_j}^2 = 1/d_\rho$, giving
\[
\inner{u}{\wpsi(\rho^\dagger) \,u} 
\le \frac{1}{\sqrt{d_\rho}} \sum_{i,j} \alpha_i^* \alpha_j \sqrt{ \Exp_x \inner{v_i}{\psi(x) \,v_j}^2  } \, . 
\]
Another application of Cauchy-Schwarz and the fact that $\norm{\psi(x)}_{\fsub}^2 = d_\psi$ gives 
%, since $\psi(x)$ is unitary, its Frobenius norm squared is $\norm{\psi(x)}_{\fsub}^2 = d_\psi$, gives
\begin{align}
\inner{u}{\wpsi(\rho^\dagger) \,u} 
&\le \frac{1}{\sqrt{d_\rho}} \sqrt{ \Bigl( \sum_{i,j} \bigl|\alpha_i\bigr|^2 \bigl|\alpha_j\bigr|^2 \Bigr) \sum_{ij} \Exp_x \inner{v_i}{\psi(x) \,v_j}^2 } 
\nonumber \\
&= \frac{1}{\sqrt{d_\rho}} \sqrt{ \Bigl( \sum_i \abs{\alpha_i}^2 \Bigr)^2 \Exp_x \Bigl[ \sum_{i,j} \inner{v_i}{\psi(x) \,v_j}^2 \Bigr] } 
\nonumber \\
&= \frac{1}{\sqrt{d_\rho}} \sqrt{ \Exp_x \norm{\psi(x)}_{\fsub}^2 } 
\nonumber \\
&= \sqrt{ \frac{d_\psi}{d_\rho} } \, ,
\label{eq:dpsi-drho}
\end{align} 
where we again used the fact that $\psi(x)$ is unitarity, or unitary in expectation, in the fourth line.

%Now, if $\Exp_x \psi(x) = 0$ we have $\wpsi(\rho^\dagger) = 0$ when $\rho$ is the trivial representation.  
Combining~\eqref{eq:tr-bound} with~\eqref{eq:dpsi-drho} and using our hypothesis that $\min_{\rho \ne 1} d_\rho = \dmin$ then gives
\begin{equation}
\label{eq:tr-dpsi-dmin}
\Exp_{x,y} \tr \psi^\dagger(xy) \,\psi(x) \,\psi(y) 
\le d_\psi \left( \bopnorm{ \Exp_x \psi(x) }^3 + \sqrt{ \frac{d_\psi}{\dmin} } \right) \; .
\end{equation}
Finally, combining this with~\eqref{eq:dist-trace} completes the proof.
\end{proof}

\begin{proof}[{\bf Proof of Corollary~\ref{cor:prob}}]
For any $A, B \in \U_d$ with $A \ne B$ we have $\frob{A-B}^2 \le 4d$.  Thus
\[
\Exp_{x,y} \norm{\psi(xy) - \psi(x) \,\psi(y)}_{\fsub}^2 \le 4d_\psi \Pr[ \psi(xy) \ne \psi(x) \,\psi(y) ] \, ,
\]
and so
\[
\Pr[ \psi(xy) = \psi(x) \,\psi(y) ] 
\le 1 - \frac{1}{4d_\psi} \,\Exp_{x,y} \norm{\psi(xy) - \psi(x) \,\psi(y)}_{\fsub}^2 \, .
\]
Combining this with the bound~\eqref{eq:approx-irreps} completes the proof.
\end{proof}

\section{Approximate homomorphisms}

In this section we prove Theorems~\ref{thm:approx-hom} and~\ref{thm:approx-hom-rh}, bounding the extent to which a function from one finite group to another can act like a homomorphism.

\begin{proof}[{\bf Proof of Theorem~\ref{thm:approx-hom}}]
Let $\sigma$ be an irreducible representation of $H$.  We treat $\psi_\sigma = \sigma \circ f$ as an approximate representation of $G$ of dimension $d_\sigma$.  To bound $\bopnorm{ \Exp_x \psi(x) }$, note that 
\[
\Exp_x \psi_\sigma(x) 
= \sum_{y \in H} p_f(y) \,\sigma(y) 
= \sum_{y \in H} (p_f-u)(y) \,\sigma(y) 
= |H| \,\widehat{(p_f-u)}(\sigma) \, .
\]
where we used the fact that $\Exp_y \sigma(y) = 0$.  
% and $u(y)=1/|H|$ denotes the uniform distribution on $H$.  
Then we have
\[
\bopnorm{ \Exp_x \psi_\sigma(x) }^2
\le \bnorm{ \Exp_x \psi_\sigma(x) }_{\fsub}^2 
\le |H|^2 \bnorm{ \widehat{(p_f-u)}(\sigma) }_{\fsub}^2 
\le \frac{|H|}{d_\sigma} \norm{p_f-u}_2^2
\le \frac{\eps}{d_\sigma} \, ,
\]
where we used Plancherel's identity~\eqref{eq:planch} in the third inequality.  Thus
\begin{equation}
\label{eq:psi-sigma}
\bopnorm{ \Exp_x \psi_\sigma(x) } \le \sqrt{ \frac{\eps}{d_\sigma} } \, , 
\end{equation}
and applying Corollary~\ref{cor:prob} completes the proof.
\end{proof}

\begin{proof}[{\bf Proof of Theorem~\ref{thm:approx-hom-rh}}]
  We let $R = \sum_{\sigma \in \hat{H}} d_\sigma \chi_\sigma$ denote
  the regular representation of $H$. As above, for an irreducible
  representation $\sigma$ of $H$ we define $\psi_\sigma = \sigma \circ
  f$. Observe that
\begin{align*}
 \Pr_{x, y}[ f(xy) = f(x) \,f(y) ] &= \frac{1}{|H|} \Exp_{x,y} [
 R(f(xy)^{-1}  f(x) f(y))] = \sum_{\sigma \in \widehat{H}}\frac{d_\sigma}{|H|} 
 \Exp_{x,y} \bigl[\chi_\sigma \bigl( f(xy)^{-1} \,f(x) \,f(y) \bigr)\bigr] \\
&= \sum_{\sigma \in \widehat{H}} \frac{d_\sigma}{|H|} 
 \Exp_{x,y} \bigl[\tr \psi_\sigma^\dagger(xy) \,\psi_\sigma(x) \,\psi_\sigma(y) \bigr)\bigr] \\
 &\le \frac{1}{|H|} + \underbrace{\sum_{\sigma \ne 1} \frac{d_\sigma}{|H|} 
 \bopnorm{\Exp_x \psi_\sigma(x) }\bnorm{\Exp_x \psi_\sigma(x)
 }_{\fsub}^2}_{(*)} + \sum_{\sigma \ne 1}
 \frac{d_\sigma^2}{|H|} \min\left( \sqrt{ \frac{d_\sigma}{\dmin} } \,
   , \; 1 \right) \, ,
\end{align*}
the last inequality following from equation~\eqref{eq:tr-bound-unsimp}. Focusing on the term $(*)$ above, 
\begin{align*}
\sum_{\sigma \ne 1} \frac{d_\sigma}{|H|} 
 \bopnorm{\Exp_x \psi_\sigma(x) }\bnorm{\Exp_x \psi_\sigma(x) }_{\fsub}^2
& \leq \sum_{\sigma \ne 1} \frac{d_\sigma}{|H|} \bnorm{\Exp_x \psi_\sigma(x) }_{\fsub}^2\\
& = \frac{\bnorm{ \Exp_x R(f(x)) }_{\fsub}^2 - 1}{|H|}\\
&  = \frac{\tr
  \left(\Exp_x \Exp_y R(f (x)^{-1})R(f(y))\right)}{|H|} -
\frac{1}{|H|}\\
& = \Pr_{x,y} [ f(x) = f(y) ] - \frac{1}{|H|} \, .
\end{align*}
Hence
\begin{align*}
 \Pr_{x, y}[ f(xy) = f(x) \,f(y) ] 
 & \leq \Pr_{x,y} [ f(x) = f(y) ] 
 + \sum_{\sigma \ne 1} \frac{d_\sigma^2}{|H|} \min\left( \sqrt{ \frac{d_\sigma}{\dmin} } \, , \; 1 \right) \\
 & \leq \Pr_{x,y} [ f(x) = f(y) ] + R_H \, .
\end{align*}

Finally, since
\[
\Pr[ f(x) = f(y) ] = \sum_{x \in H} p_f(x)^2 = \norm{p_f}_2^2 = \norm{u}_2^2 + \norm{ p_f - u}_2^2 \le \frac{1 + \epsilon}{|H|} \, , 
\]
the statement of the theorem follows.
\end{proof}

\section{Minors of representations and their polar decompositions}

In this section we prove Theorems~\ref{thm:minor-tight} and~\ref{thm:polar}. 

\begin{proof}[Proof of Theorem~\ref{thm:minor-tight}] 
Let $\rho: G \rightarrow \U(V)$ be an irreducible representation of $G$ of dimension $d_\rho$ and let $\Pi: V \rightarrow V$ be a projection operator of rank $d_\psi$. Treating the image of $\Pi$ as a subspace $W$, we consider the function $\psi: G \rightarrow \End(W)$ given by
\[
\psi(x) = \sqrt{\frac{d_\rho}{d_\psi}} \,\Pi \rho(x) \Pi \, .
\]
Then 
\[
\Exp_x \psi(x) = \sqrt{\frac{d_\psi}{d_\rho}} \,\Pi \left( \Exp_x \rho(x) \right) \Pi = 0 \, .
\]
Moreover, since $\rho$ is irreducible, Schur's lemma gives
\begin{equation}
\label{eq:symmetrized-proj}
\Exp_x \rho(x)^\dagger \Pi \rho(x) 
= \frac{\rk \Pi}{d_\rho} \one 
= \frac{d_\psi}{d_\rho} \one \, .
\end{equation}
Thus 
\[
\Exp_x \psi(x)^\dagger \psi(x) 
= \frac{d_\rho}{d_\psi} \Exp_x \Pi \rho(x)^\dagger \Pi \rho(x) \Pi
= \frac{d_\rho}{d_\psi} \Pi \left( \Exp_x \rho(x)^\dagger \Pi \rho(x) \right) \Pi
= \Pi \, . 
\]
Since $\Pi$ is the identity on the subspace $W$, $\psi(x)$ is unitary in expectation.

%expected Frobenius norm of $\psi(x)$ is
%\begin{gather}
%\label{eq:proj-norm}
%\Exp_x \frob{\psi(x)}^2 
%= \frac{d_\rho}{d_\psi} \Exp_x \tr\left( \Pi \rho(x)^\dagger \Pi \rho(x) \Pi\right) 
%= \frac{d_\rho}{d_\psi} \tr \,\Pi \left( \Exp_x \rho(x)^\dagger \Pi \rho(x)\right) 
%%= \frac{d_\rho}{d_\psi} \tr \left( \Pi \frac{d_\psi}{d_\rho} \one \right) 
%= \tr \Pi 
%= d_\psi \, ,
%\end{gather}
%and similarly
%\begin{align*}
% \Exp_{x,y} \frob{\psi(x) \,\psi(y)}^{\!2}
% &= \left( \frac{d_\rho}{d_\psi} \right)^{\!2} \Exp_{x,y} \tr \left[ \Pi \rho(x)^\dagger \Pi \rho(y)^\dagger \Pi \rho(y) \Pi \rho(x) \Pi \right] \\
% &= \left( \frac{d_\rho}{d_\psi} \right)^{\!2} \tr \left[ \Pi \left( \Exp_x \rho(x) \Pi \rho(x)^\dagger \right) \Pi \left( \Exp_y \rho(y)^\dagger \Pi \rho(y) \Pi \right) \right]
% = \tr \Pi = d_\psi \, . 
%\end{align*}
%Thus Condition~\eqref{eq:cond} holds.  

As in the proof of Theorem~\ref{thm:approx-irreps}, we then have
\begin{equation}
\label{eq:minor-frob}
\Exp_{x,y} \norm{\psi(xy) - \psi(x) \,\psi(y)}_{\fsub}^2 
= 2d_\psi \left(1 - \frac{1}{d_\psi} \Re \Exp_{x,y} \tr \psi^\dagger(xy) \,\psi(x) \,\psi(y) \right) \, .
\end{equation}
Since $\rho$ is a genuine representation, $\rho(xy) = \rho(x) \,\rho(y)$ and
\begin{align*}
\Exp_{x,y} \tr \psi^\dagger(xy) \,\psi(x) \,\psi(y) 
&= \left( \frac{d_\rho}{d_\psi} \right)^{\!3/2} \Exp_{x,y} \tr \Pi \rho^\dagger(xy) \Pi \rho(x) \Pi \rho(y) \Pi \\
&= \left( \frac{d_\rho}{d_\psi} \right)^{\!3/2} \Exp_{x,y} \tr \Pi \rho(y)^\dagger \rho(x)^\dagger \Pi \rho(x) \Pi \rho(y) \Pi \\
&= \left( \frac{d_\rho}{d_\psi} \right)^{\!3/2} 
\tr \left[ \left( \Exp_y \rho(y) \Pi \rho(y)^\dagger \right) \Exp_x \left( \rho(x)^\dagger \Pi \rho(x) \right) \Pi \right] \\
&= \sqrt{\frac{d_\psi}{d_\rho}} \, \tr \Pi 
= d_\psi \sqrt{\frac{d_\psi}{d_\rho}} \, .
\end{align*}
Combining this with~\eqref{eq:minor-frob} completes the proof.
\end{proof}

%Before we prove Theorem~\ref{thm:polar} we offer the following lemma.
%
%\begin{lemma}
%\label{lem:proj}
%Let $u,v \in V$ be $d_V$-dimensional vectors of unit length.  Let $W \subseteq V$ be a subspace chosen uniformly from all subspaces of a given dimension $d_W$, and let $\Pi$ be the projection operator onto $W$.  Then 
%\begin{equation}
%\label{lem:proj}
%\Exp_\Pi \abs{\inner{\Pi u}{\Pi v}}^2
%%= \frac{1}{d_V^2-1} \left( d_W^2 \abs{\inner{u}{v}}^2 + d_W - \frac{1}{d_V} \left( d_W \abs{\inner{u}{v}}^2 + d_W^2 \right) \right) \, .
%= \frac{d_W}{d_V(d_V^2-1)} \left( \left( d_V d_W - 1 \right) \abs{\inner{u}{v}}^2 + d_V - d_W \right) \, .
%\end{equation}
%\end{lemma}
%
%\begin{proof}  We can take the expectation over $W$ by fixing $\Pi$ to be a particular projection operator of rank $d_W$ and conjugating $\Pi$ with a unitary matrix $U \in \U_d$ chosen according to the Haar measure.  Then 
%\[
%\Exp_\Pi \abs{\inner{\Pi u}{\Pi v}}^2 
%= \Exp_U \abs{\inner{\Pi U u}{\Pi U v}}^2 
%= \Exp_U \tr \left[ \outer{u}{u} U^\dagger \Pi U \outer{v}{v} U^\dagger \Pi U \right] \, .
%\]
%Here $\outer{u}{u}$ denotes the outer product of $u$ with itself, i.e., the linear operator $(\outer{u}{u})^i_j = u_i u_j^*$.  We can evaluate this expectation diagrammatically, using the fact that
%\[
%\Exp_U \left[ U^\dagger \otimes U \otimes U^\dagger \otimes U \right] = ???
%\]
%\end{proof}

\begin{proof}[Proof of Theorem~\ref{thm:polar}]
The squared $\ell_2$ distance between a matrix $A$ and the unitary part of its polar decomposition, $\tA = A(A^\dagger A)^{-1/2}$, is 
\[
\frob{A-\tA}^2 
= \frob{A-A(A^\dagger A)^{-1/2}}^2 
= \tr A^\dagger A - 2 \tr (A^\dagger A)^{1/2} + d
= \sum_\lambda (\lambda-1)^2 \, .
\]
Here $\lambda$ ranges over the singular values of $A$, i.e., the square roots of the eigenvalues of $A^\dagger A$.  For any $\lambda \ge 0$ we have
\begin{equation}
\label{eq:lambda}
(\lambda-1)^2 \le (\lambda-1)^2 (\lambda+1)^2 = (\lambda^2-1)^2 \, .
\end{equation}
Thus the distance between $A$ and $\tA$ is at most the distance between $A^\dagger A$ and the identity, 
\begin{equation}
\label{eq:ata}
\frob{A-\tA}^2 \le \sum_\lambda (\lambda^2-1)^2 = \frob{A^\dagger A - \one}^2 \, . 
\end{equation}

Let $\psi(x) = \sqrt{d_\rho/d_\psi} \,\Pi \rho(x) \Pi$.  Since $\Pi$ is the identity on the subspace $W$, the rest of our proof consists of bounding
\begin{align}
\Exp_{\Pi,x} \frob{\psi(x) - \tpsi(x)}^2 
&\le \Exp_{\Pi,x} \frob{\psi(x)^\dagger \psi(x)-\Pi}^2 \nonumber \\
&= \Exp_{\Pi,x} \frob{\psi(x)^\dagger \psi(x)}^2 - 2 \tr \Exp_x \psi(x)^\dagger \psi(x) + d_\psi \nonumber \\
&= \Exp_{\Pi,x} \frob{\psi(x)^\dagger \psi(x)}^2 - d_\psi \, , \label{eq:firstwenotethat}
\end{align}
where in the last line we used the fact, proved in Theorem~\ref{thm:minor-tight}, that $\psi$ is unitary in expectation.  We will then use the triangle inequality to bound 
\[
\Exp_{\Pi} \Exp_{x,y} \frob{\tpsi(xy) - \tpsi(x) \,\tpsi(y)}^2 \, .
\]

We write
\begin{equation}
\label{eq:pirho4}
\Exp_{\Pi,x} \frob{\psi(x)^\dagger \psi(x)}^2 
= \left( \frac{d_\rho}{d_\psi} \right)^{\!2} \tr \Exp_{\Pi,x} \left[ \Pi \rho^\dagger(x) \Pi \rho(x) \Pi \rho^\dagger(x) \Pi \rho(x) \right]\,.
%= \sum_{u,v \in B} \abs{\inner{\psi(x) u}{\psi(x) v}}^2
%= \sum_{u,v \in B} \abs{\inner{\Pi \rho(x) u}{\Pi \rho(x) v}}^2 \, . 
\end{equation}
%Since $\rho(x)$ is unitary, $\inner{\rho(x) u}{\rho(x) v} = \inner{u}{v}$.  We now invoke Lemma~\ref{lem:proj}...
We can view this trace as a contraction of two tensors.  One is $\rho \otimes \rho^\dagger \otimes \rho \otimes \rho^\dagger$.  Since we can take the expectation over all $\Pi$ by conjugating a particular $\Pi$ by a random unitary $U \in \U_{d_\rho}$, the other is the ``twirl'' of $\Pi^{\otimes 4}$, namely 
\[
\Upsilon = \Exp_U (U^\dagger \Pi U)^{\otimes 4} \, . 
\]
Since $\Upsilon$ commutes with the diagonal action of $U(d_\rho)$, it is a member of the commutant, and hence an element of the group algebra $\C[S_4]$.  Thus we can write
\[
\Upsilon = \sum_{\pi \in S_4} \upsilon(\pi) \cdot \pi \, ,
\]
where we identify each $\pi \in S_4$ with its action on $V^{\otimes
  4}$.  Moreover, since $\Upsilon$ commutes with any $\pi \in S_4$ the
coefficients $\upsilon(\pi)$ form a \emph{class function}: $\upsilon$
is constant on each conjugacy class and lies in the linear span of the
characters of $S_4$.  

We can compute the coefficients $\upsilon(\pi)$ as follows.  For any permutation $\sigma \in S_4$, we have
\[
\tr T \sigma = \tr \Pi^{\otimes 4} \sigma = d_\psi^{c(\sigma)} \, ,
\]
where $c(\sigma)$ is the number of cycles in $\sigma$.  For any $\lambda \in \wS_4$, the inner product of the character $\chi_\lambda$ with the function $d^{c(\cdot)}$ is 
\[
\inner{\chi_\lambda}{d^{c(\cdot)}} = T(\lambda,d) \, ,
\]
where $T(\lambda,d)$ denotes the number of semistandard tableaux of shape $\lambda$ and content in $\{1,\ldots,d\}$.  The multiplicity of $\lambda$ in $(\C^d)^{\otimes 4}$ is also $T(\lambda,d)$, so taking traces gives
\[
\upsilon(\pi) = \sum_{\lambda \in \wS_4} d_\lambda \chi_\lambda(\pi) \frac{T(\lambda,d_\psi)}{T(\lambda,d_\rho)} \, .
\]

A somewhat lengthy calculation gives the following coefficients for each of the five conjugacy classes in $S_4$:
\begin{align*}
\upsilon(1) 
&= \left( \frac{d_\psi}{d_\rho} \right)^{\!4} 
+ O(d_\rho^{-2}) \\
\upsilon((12)) 
&= \frac{1}{d_\rho} \left( \frac{d_\psi}{d_\rho} \right)^{\!3} \left( 1 - \frac{d_\psi}{d_\rho}  \right) 
+ O(d_\rho^{-3}) \\ 
\upsilon((123)) 
&= \frac{1}{d_\rho^2} \left( \frac{d_\psi}{d_\rho} \right)^{\!2} \left( 2 \,\frac{d_\psi}{d_\rho} - 1 \right) \left( 1- \frac{d_\psi}{d_\rho} \right) 
+ O(d_\rho^{-4}) \\ 
\upsilon((12)(34)) 
&= \frac{1}{d_\rho^2} \left( \frac{d_\psi}{d_\rho} \right)^{\!2} \left( 1- \frac{d_\psi}{d_\rho} \right)^{\!2}
+ O(d_\rho^{-4}) \\ 
\upsilon((1234)) 
&= \frac{1}{d_\rho^3} \left( 5 \left( \frac{d_\psi}{d_\rho} \right)^{\!2} - 5 \,\frac{d_\psi}{d_\rho} + 1 \right) \left( 1- \frac{d_\psi}{d_\rho} \right) \frac{d_\psi}{d_\rho} 
+ O(d_\rho^{-5}) 
\end{align*}
Note that $\upsilon(\pi)$ scales as $d_\rho^{-t(\pi)}$ where $t(\pi)=4-c(\pi)$ is the transposition distance, i.e., the minimum number of transpositions whose product gives $x$.

When $\pi$ is the identity, one of the pairs of transpositions (12)(34), and four of the 3-cycles (123), we get $\rho(x) \,\rho^\dagger(x) \,\rho(x) \,\rho^\dagger(x) = \one$, contributing $d_\rho$ to the trace.  Two of the six transpositions $(12)$ contract with $\rho \otimes \rho^\dagger \otimes \rho \otimes \rho^\dagger$ to give $\rho(x) \,\rho^\dagger(x) \otimes \rho(x) \,\rho^\dagger(x) = \one \otimes \one$, which has trace $d_\rho^2$.  These are the leading terms, and we get
\begin{align*}
\Exp_{\Pi,x} \tr & \left[ \Pi \rho^\dagger(x) \Pi \rho(x) \Pi \rho^\dagger(x) \Pi \rho(x) \right] \\
%= \left[ \left( \frac{d_\psi}{d_\rho} \right)^{\!4} + 2 \left( \frac{d_\psi}{d_\rho} \right)^{\!3} \left( 1 - \frac{d_\psi}{d_\rho}  \right) \right] d_\rho 
&= \big( \upsilon(1) + \upsilon((12)(34)) + 4\upsilon((123)) \big) \,d_\rho + 2 \upsilon((12)) \,d_\rho^2 + O(d_\rho^{-1}) + \text{other terms} \\
&= d_\rho \left( \frac{d_\psi}{d_\rho} \right)^{\!3} \left( 2 - \frac{d_\psi}{d_\rho} \right) + O(d_\rho^{-1}) + \text{other terms} 
 \, ,
\end{align*}
where here and in the sequel $O(\cdot)$ refers to the limit $d_\rho \to \infty$ while $d_\psi / d_\rho$ stays constant.

To bound the other terms, let $m$ denote the total multiplicity of irreducible representations appearing in the decomposition of $\rho \otimes \rho^*$.  Then
\[
\Exp_{x \in G} \abs{\chi_\rho(x)}^4 = m 
\quad \text{and} \quad
\Exp_{x \in G} \abs{\chi_\rho(x^2)}^2 \le m \, . 
\]
The first of these follows from Schur's lemma, since for any irrep $\tau$ we have $\Exp_{x \in G} \abs{\chi_\tau(x)}^2 = 1$.   The second follows from the \emph{Frobenius-Schur indicator}, which for any irrep $\tau$ is
\[
\Exp_{x \in G} \chi_\tau(x^2) = \begin{cases} 
+1 & \mbox{if $\tau$ is real,} \\
0 & \mbox{if $\tau$ is complex,} \\
-1 & \mbox{if $\tau$ is quaternionic.} 
\end{cases} 
\]

The other terms include contractions such as $(\rho \rho^\dagger \rho) \otimes \rho^\dagger = \rho\otimes \rho^\dagger$, $(\rho \rho^\dagger) \otimes \rho \otimes \rho^\dagger = \one \otimes \rho \otimes \rho^\dagger$, and so on.  These terms scale as
\begin{equation}
\label{eq:other-terms}
\begin{gathered}
\big( 4 \upsilon((12)) + 4 \upsilon((1234)) \big) \,\Exp_x \abs{\chi_\rho(x)}^2 = O(d_\rho^{-1}) \\
4 \upsilon((123)) \,d_\rho \Exp_x \abs{\chi_\rho(x)}^2 = O(d_\rho^{-1}) \\
\upsilon((1234)) \,\Exp_x \abs{\chi_\rho(x)}^4 = O(d_\rho^{-3} m) \\
\upsilon((1234)) \,\Exp_x \abs{\chi_\rho(x^2)}^2 = O(d_\rho^{-3} m) \\
2 \upsilon((12)(34)) \,\Exp_x \chi_\rho(x^2)^* \chi_\rho(x)^2 
= O(d_\rho^{-2}) \sqrt{ \Exp_x \abs{\chi_\rho(x^2)}^2 \,\Exp_x \abs{\chi_\rho(x)}^4 }
= O(d_\rho^{-2} m) \, . 
\end{gathered}
\end{equation}

If $G$ is quasirandom, with $\dmin$ the dimension of its smallest nontrivial irrep, then 
\[
m \le 1 + \frac{d_\rho^2-1}{\dmin} \le d_\rho^2
\]
since $\rho \otimes \rho^*$ contains exactly one copy of the trivial irrep.  Even if we content ourselves with the generous bound $m \le d_\rho^2$, the largest error term in~\eqref{eq:other-terms} is $O(1)$.  Thus 
\[
\Exp_{\Pi,x} \tr \left[ \Pi \rho^\dagger(x) \Pi \rho(x) \Pi \rho^\dagger(x) \Pi \rho(x) \right] 
= d_\rho \left( \frac{d_\psi}{d_\rho} \right)^{\!3} \left( 2 - \frac{d_\psi}{d_\rho} \right) + O(1) \, .
\]
Combining this with~\eqref{eq:pirho4} gives
\[
\frob{\psi(x)^\dagger \psi(x)}^2 
= d_\psi \left( 2 - \frac{d_\psi}{d_\rho} \right) + O(1) \, ,
\]
and so~\eqref{eq:firstwenotethat} gives
\begin{equation}
\label{eq:polar-bound}
\Exp_{\Pi,x} \frob{\psi(x) - \tpsi(x)}^2 \le 
%\Exp \frob{\psi(x)^\dagger \psi(x)-\one}^2 =
d_\psi \left( 1 - \frac{d_\psi}{d_\rho} \right) + O(1) \, .
\end{equation}

Finally, we return to our task of bounding
\[
\Exp \frob{\tpsi(xy) - \tpsi(x) \,\tpsi(y)}^2 \, .
\] 
For this purpose, we use the triangle inequality and write
\begin{align}
\frob{\tpsi(xy) - \tpsi(x) \,\tpsi(y)}  
&\le \frob{\tpsi(xy) - \psi(xy)} \label{eq:polar-single} \\
&+ \frob{\psi(xy) - \psi(x) \,\psi(y)} \label{eq:polar-hom} \\ 
&+ \frob{\psi(x) \,\psi(y) - \tpsi(x) \,\tpsi(y)} \, . \label{eq:mixed}
\end{align}
In expectation, the squares of the terms appearing in~\eqref{eq:polar-single} and~\eqref{eq:polar-hom} are precisely the topic of~\eqref{eq:polar-bound} and Theorem~\ref{thm:minor-tight}, respectively. As for the quantity~\eqref{eq:mixed}, we may further expand it as
\begin{align*}
  \frob{\psi(x) \,\psi(y) - \tpsi(x) \,\tpsi(y)} 
  &\leq \frob{\psi(x) \,\psi(y) - \tpsi(x) \,\psi(y)} + \frob{\tpsi(x) \,\psi(y) - \tpsi(x) \,\tpsi(y)} \\
  &= \frob{\big( \psi(x) - \tpsi(x) \big) \psi(y)} + \frob{\tpsi(x) \big( \psi(y)-\tpsi(y) \big)} \\
  &= \frob{\big( \psi(x) - \tpsi(x) \big) \psi(y)} + \frob{\psi(y)-\tpsi(y)} \, , 
  \end{align*}
where in the last line we used the unitarity of $\tpsi(x)$.  Squaring both sides and using the inequality $(a+b+c+d)^2 \le 4(a^2+b^2+c^2+d^2)$ gives 
\begin{align*}
\frob{\tpsi(xy) - \tpsi(x) \,\tpsi(y)}^2   
& \le 4 \left( \frob{\tpsi(xy) - \psi(xy)}^2 \right. \\
&+ \frob{\psi(xy) - \psi(x) \,\psi(y)}^2 \\
&+ \frob{\big( \psi(x) - \tpsi(x) \big) \psi(y)}^2 \\
&+ \left. \frob{\psi(y)-\tpsi(y)}^2 \right) \, . 
\end{align*}
When we take the expectation over $y$, the third term simplifies since $\psi(y)$ is unitary in expectation:
\begin{align*}
\Exp_y \frob{ \big( \psi(x) - \tpsi(x) \big) \psi(y)}^2 
&= \tr \left[ \big( \psi(x) - \tpsi(x) \big)^\dagger \left( \Exp_y \psi(y)^\dagger \psi(y) \right) \big( \psi(x) - \tpsi(x) \big) \right] \\
&= \tr \left[ \big( \psi(x) - \tpsi(x) \big)^\dagger \big( \psi(x) - \tpsi(x) \big) \right] \\
&= \frob{ \psi(x) - \tpsi(x) }^2 \, . 
\end{align*} 

Putting this together, taking expectations over $\Pi$, $x$, and $y$, using the fact that $xy$ is uniformly random, and applying~\eqref{eq:polar-bound} and Theorem~\ref{thm:minor-tight} gives
\begin{align*}
\Exp_{\Pi,x,y} \frob{\tpsi(xy) - \tpsi(x) \,\tpsi(y)}^2  
&\le 4 \left( \Exp_{x,y} \frob{\psi(xy) - \psi(x) \,\psi(y)}^2 + 3 \Exp_{\Pi,x} \frob{\psi(x) - \tpsi(x)}^2 \right) \\
&\le 2 d_\psi \left( 4 \left( 1 - \sqrt{\frac{d_\psi}{d_\rho}} \right) + 6 \left( 1-\frac{d_\psi}{d_\rho} \right) \right)
\, ,
\end{align*}
completing the proof.
\end{proof}

There are a number of ways one might improve Theorem~\ref{thm:polar}.  The bound~\eqref{eq:lambda}, and therefore~\eqref{eq:ata}, is off by a factor of $(\lambda+1)^2 \approx 4$ when $\lambda$ is close to $1$, i.e., when $\psi(x)$ is close to unitary.  Using the triangle inequality is also rather crude.  With more thought one should be able to bound $(1/d_\psi) \Exp_{\Pi,x,y} \frob{\tpsi(xy) - \tpsi(x) \,\tpsi(y)}^2$ with a smaller function of the ratio $d_\psi/d_\rho$, and thus achieve good approximate representations in lower dimensions.

%Then
%\begin{align*}
%\Exp_{x,y \in G} &\norm{\psi(xy) - \psi(x) \,\psi(y)}_{\fsub}^2 \\
%&= \Exp_{x,y} \frob{\psi(xy)}^2 + \Exp_{x,y} \frob{\psi(x) \,\psi(y)}^2 - 2 \Exp_{x,y} \Re \tr 
%=& \Exp_{x,y} \norm{ \frac{d}{d_\psi} \Pi \rho(x) \Pi \rho(y) \Pi - \sqrt{\frac{d}{d_\psi}} \Pi \rho(xy)\Pi}_{\fsub}^2\\
%=& \Exp_{x,y} \left[ \underbrace{\norm{\frac{d}{d_\psi} \,\Pi \rho(x) \Pi \rho(y) \Pi}_{\fsub}^2}_{(1)} 
%- \underbrace{2 \left(\frac{d}{d_\psi}\right)^{3/2} \Re \tr \left( \Pi \rho(xy)^\dagger \Pi \rho(x) \Pi \rho(y) \Pi  \right)}_{(2)} 
%+ \underbrace{\norm{\sqrt{\frac{d}{d_\psi}}\, \Pi \rho(xy)\Pi}_{\fsub}^2}_{(3)} \right] \, .
%\end{align*}
%As for the first of these terms, $(1)$ above, observe that
%\begin{align*}
%\Exp_{x,y} \norm{\frac{d}{d_\psi} \Pi \rho(x) \Pi \rho(y) \Pi}_{\fsub}^2
%&= \frac{d^2}{d_\psi^2} \Exp_{x,y} \tr \left( \Pi \rho(y)^\dagger \Pi
% \rho(x)^\dagger \Pi \rho(x) \Pi \rho(y) \Pi\right)\\ 
%&= \frac{d^2}{d_\psi^2} \Exp_{y} \tr \left( \Pi \rho(y)^\dagger \Pi
%\Exp_x \left( \rho(x)^\dagger \Pi \rho(x)\right) \Pi \rho(y) \Pi\right)\\ 
%&= \frac{d^2}{d_\psi^2} \Exp_{y} \tr \left( \Pi \rho(y)^\dagger \Pi
%\left( \frac{d_\psi}{d} \one\right) \Pi \rho(y) \Pi\right)\\ 
%&= \frac{d}{d_\psi} \tr \left( \Pi \Exp_{y}  \left( \rho(y)^\dagger \Pi \rho(y)\right)
% \Pi\right) = \frac{d}{d_\psi} \tr \left( \Pi \left( \frac{d_\psi}{d}
%   \one \right) \Pi\right) = d_\psi
%\end{align*}
%by two applications of the equality~\eqref{eq:symmetrized-proj}.
%As $\rho(xy)^\dagger = \rho(y)^\dagger \rho(x)^\dagger$ the second
%term, $(2)$ above, may be evaluated by similar considerations:

\paragraph{Acknowledgments}  We benefited from the lectures of Avi Wigderson and Ben Green at the Bellairs Research Institute of McGill University, and from conversations with Jon Yard about twirled operators.  This work was supported by the NSF under grants CCF-0829931, 0835735, and 0829917, and by the NSA under contract W911NF-04-R-0009.

\end{document}